\DeclareMathOperator{\cf}{\mathfrak{cf}}
\DeclareMathOperator{\rank}{rank}
\DeclareMathOperator{\im}{im}
\DeclareMathOperator{\N}{\mathcal{N}}
\DeclareMathOperator{\Sel}{Sel}
\title[The $3$-isogeny Selmer groups]{The $3$-isogeny Selmer groups of the elliptic curves $y^2=x^3+n^2$}
\author{Stephanie Chan}
\date{August 1, 2023}
\thanks{
The author would like to thank Peter Koymans and Carlo Pagano for helpful discussions.}
\address{Department of Mathematics, University of Michigan}
\email{ytchan@umich.edu}
\newcommand{\cF}{\mathcal{F}}
\newcommand{\bu}{\mathbf{u}}
\newcommand{\bv}{\mathbf{v}}
\newcommand{\blam}{\boldsymbol{\lambda}}
\newcommand{\bw}{\mathbf{w}}
\newcommand{\Q}{\mathbb{Q}}
\newcommand{\Z}{\mathbb{Z}}
\newcommand{\F}{\mathbb{F}}
\newcommand{\OO}{\mathcal{O}}
\newcommand{\p}{\mathfrak{p}}
\newcommand{\leg}[2]{\left(\frac{#1}{#2}\right)}
\newcommand{\hphi}{\hat{\phi}}
\newcommand{\hE}{\hat{E}}
\newtheorem{theorem}{Theorem}[section]
\newtheorem{lemma}[theorem]{Lemma}
\newtheorem{definition}[theorem]{Definition}
\begin{document}
\begin{abstract}
Consider the family of elliptic curves $E_n:y^2=x^3+n^2$, where $n$ varies over positive cubefree integers. There is a rational $3$-isogeny $\phi$ from $E_n$ to $\hE_n:y^2=x^3-27n^2$ and a dual isogeny $\hphi:\hE_n\rightarrow E_n$. We show that for almost all $n$, the rank of $\Sel_{\phi}(E_n)$ is $0$, and the rank of $\Sel_{\hphi}(\hE_n)$ is determined by the number of prime factors of $n$ that are congruent to $2\bmod 3$ and the congruence class of $n\bmod 9$.
\end{abstract}

\maketitle

\section{Introduction}
We study a cubic twist family of elliptic curves
\[E_n:y^2=x^3+n^2,\]
where $n$ ranges over cubefree positive integers.
The rational $3$-isogeny from $E_n$ to its dual curve
$\hE_n:y^2=x^3-27n^2$
is given explicitly by
\[\phi:E_n(\Q)\rightarrow \hE_n(\Q)\qquad (x,y)\mapsto\left(\frac{x^3+4n^2}{x^2},\frac{y(x^3-8n^2)}{x^3}\right).\]
The dual isogeny of $\phi$ is
\[\hphi:\hE_n(\Q)\rightarrow E_n(\Q)\qquad
(x,y)\mapsto\left(\frac{x^3-108n^2}{9x^2},\frac{y(x^3+216n^2)}{27x^3}\right).\]
For elliptic curves with rational $3$-isogenies, it is natural to study their $3$-isogeny Selmer groups. 
The $\phi$-Selmer group $\Sel_\phi(E_n)$ is the subgroup of $H^1(G_\Q, E_n[\phi])$ of classes that are locally in the image of the Kummer map 
$\hE_n(\Q_p) \rightarrow H^1(G_{\Q_p}, E_n[\phi])$ for every place $p$ of $\Q$. The $\hphi$-Selmer group $\Sel_{\hphi}(E_n)$ is defined in the same way. We say that a polynomial over $\Q$ is \emph{everywhere locally solvable} if it has a nontrivial zero over $\Q_p$ for every place $p$ of $\Q$. Let $K\coloneqq\Q(\sqrt{-3})$. Following~\cite[Sections~3 and~4]{CohenPazuki}, the Selmer groups can be described more explicitly with the following identifications:
\begin{align}\label{eq:explicitSelhat}
 \Sel_{\hphi}(\hE_n)&=
\{[u]\in \Q^\times/(\Q^{\times})^3: uX^3+u^2Y^3+2nZ^3=0\text{ is everywhere locally solvable}\},\\\label{eq:explicitSel}
\Sel_{\phi}(E_n)&=
\{[u]\in G_3: 
u(X + Y\sqrt{-3})^3 + \overline{u}(X -Y\sqrt{-3} )^3 + 2nZ^3 = 0 \\
& \hspace{20em} \text{ is everywhere locally solvable}\},\notag
\end{align}
where $G_3$ is the subgroup of classes of elements of $K^{\times}/(K^{\times})^3$ whose norm is a cube, and $\overline{u}$ denotes the complex conjugate of $u$. 
For related work on explicit $3$-descent on such curves, see for example~\cite{CohenPazuki,DeLong,Top,FJKRTX}.

To state our results, define the sets
\begin{align*}
 \mathcal{D} &\coloneqq\{D\in\Z: D\geq 1 \text{ cubefree}\},\\
 \mathcal{D}(N) &\coloneqq\{D\in\mathcal{D}:D\leq N\},\\
 \end{align*}
 and for integers $i$ the functions
\begin{align*}
 \omega(n) &\coloneqq \#\{\text{prime }p:p\mid n\},\\
 \omega_i(n) &\coloneqq \#\{\text{prime }p:p\mid n,\ p\equiv i\bmod 3\}.
\end{align*}
Let $\cf(2n)$ denote the cubefree part of $2n$, so for $n\in\mathcal{D}$, we have
\[\cf(2n)=\begin{cases}
2n&\text{if }v_2(n)\neq 2,\\
\frac{1}{4}n&\text{if }v_2(n)= 2,
\end{cases}
\]
where $v_p(n)$ denotes the $p$-adic valuation of an integer $n$.

The groups $\Sel_{\phi}(E_n)$ and $\Sel_{\hphi}(\hE_n)$ are finite $3$-groups, so we make sense to study their ranks $\rank \Sel_{\phi}(E_n)\coloneqq \dim_{\F_3}\Sel_{\phi}(E_n)$ and $\rank \Sel_{\hphi}(\hE_n)\coloneqq \dim_{\F_3}\Sel_{\hphi}(\hE_n)$. 
Our main result is the following theorem, which determines the rank of $\Sel_{\phi}(E_n)$ and $\Sel_{\hphi}(\hE_n)$ for almost all positive cubefree integers $n$ when ordered by the size of $n$.

\begin{theorem}\label{theorem:main}
Let $n\in\mathcal{D}$. Define 
\begin{equation}\label{eq:deltan}
\delta_n =
 \begin{cases}
 \hfil 1&\text{if }n\equiv \pm 3\bmod 9,\\
 -1&\text{if }n\equiv \pm 4 \bmod 9,\\
 \hfil 0&\text{otherwise}.
 \end{cases}
\end{equation}
Then
\begin{equation}\label{eq:tamratio}
 \rank \Sel_{\hphi}(\hE_n)= \rank \Sel_{\phi}(E_n)+\omega_2(\cf(2n))+\delta_n.
\end{equation}
Moreover for any $\epsilon>0$, we have
\begin{equation}\label{eq:selden}
\#\{n\in\mathcal{D}(N):\rank \Sel_{\phi}(E_n)\neq 0\}\ll N(\log N)^{-\frac{1}{3}+\epsilon}.\end{equation}
\end{theorem}
By Cassels' formula~\cite{Casselsformula}, the ratio between $\#\Sel_{\hphi}(\hE_n)$ and $\#\Sel_{\phi}(E_n)$ can be expressed as a product of local Tamagawa ratios. The Tamagawa ratios can be computed explicitly using Tate's algorithm. For the curves $E_n$ and $\hE_n$ here, the computations have been done in~\cite[Proposition~34]{BES}, and~\eqref{eq:tamratio} is immediate by combining all the local factors.

Notice that~\eqref{eq:tamratio} implies a lower bound 
\begin{equation}\label{eq:selhatlb}
 \rank \Sel_{\hphi}(\hE_n)\geq \omega_2(\cf(2n))+\delta_n.
\end{equation}
Furthermore, since $\rank \Sel_{\phi}(E_n)$ is almost always $0$ by~\eqref{eq:selden}, putting this into~\eqref{eq:tamratio} shows that the rank of $\Sel_{\hphi}(\hE_n)$ almost always takes the lower bound in~\eqref{eq:selhatlb}.
In Section~\ref{section:redei}, we show how the lower bound~\eqref{eq:selhatlb} and the relation~\eqref{eq:tamratio} can be visualized more explicitly by studying matrices of cubic residue symbols 
formed from packaging the local solvability conditions of the cubic equations from~\eqref{eq:explicitSelhat} and~\eqref{eq:explicitSel}. We will identify $\Sel_{\hphi}(\hE_n)$ with the kernel of some $\omega_1(n)+r$ by $\omega(n)+c$ matrix, where $r,c\in\{-1,0,1\}$. Then the dimensions of the matrix guarantees the lower bound~\eqref{eq:selhatlb}. Furthermore, as $n$ grows, we expect that usually $\omega_1(n)\approx\frac{1}{2}\log\log n$ and $\omega(n)\approx\log\log n$ in the sense of the Erd\H{o}s–Kac theorem. Heuristically, viewing the entries of the matrix as random elements in $\F_3$ picked uniformly, the probability that the matrix takes the maximum possible rank tends to $1$ as $n$ tends to infinity.
This observation is similar to that in work of Fouvry, Koymans, and Pagano on the $4$-rank of class groups of biquadratic fields~\cite{FKP}. There they show that for almost all odd positive squarefree integers $n$, the $4$-rank of the class group of $\Q(\sqrt{n}, \sqrt{-1})$ is $1$ less than the number of primes dividing $n$ that are congruent to $3\bmod 4$.

To show that the rank of $\Sel_{\hphi}(\hE_n)$ indeed attains this lower bound~\eqref{eq:selhatlb} for almost all $n$, we compute the weighted moments of the size of $\Sel_{\hphi}(\hE_n)$.
This moment computation is analogous to that done by Heath-Brown~\cite{HBSelmer1,HBSelmer} in obtaining the distribution of the $2$-Selmer group of the elliptic curves $y^2=x^3-D^2x$, with $D$ being squarefree. 

\begin{theorem}\label{theorem:phihatSelmer}
For any $\epsilon>0$ and any positive integer $k$, we have
\[\sum_{n\in\mathcal{D}(N)} (\#\Sel_{\phi}(E_n))^k=\sum_{n\in\mathcal{D}(N)} \left(3^{-(\delta_n+\omega_2(\cf(2n)))}\#\Sel_{\hphi}(\hE_n)\right)^k= 
\#\mathcal{D}(N)
+
O_{k,\epsilon}\left(N(\log N)^{-\frac{1}{3}+\epsilon}\right).\]
\end{theorem}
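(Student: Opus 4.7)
The equality of the first two sums is immediate from Theorem~1.1: since $\#\Sel=3^{\rank \Sel}$, applying~\eqref{eq:tamratio} term by term gives $\#\Sel_\phi(E_n)=3^{-(\delta_n+\omega_2(\cf(2n)))}\#\Sel_{\hphi}(\hE_n)$. What requires real work is the moment asymptotic, and for this I would follow the strategy of Heath-Brown~\cite{HBSelmer1,HBSelmer} for the $2$-Selmer groups of $y^2=x^3-D^2x$, adapted to the cubic setting here.

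The Redei-matrix description to be developed in Section~\ref{section:redei} identifies $\Sel_\phi(E_n)$ with the kernel of an explicit $\F_3$-matrix $M_n$ of size $O(\omega(n))$ whose entries are cubic residue symbols between ideals of $\Z[\zeta_3]$ above the primes dividing $6n$. Fourier inversion on $\F_3$ yields
\[
\#\ker M_n=3^{-r(n)}\sum_{v}\,\prod_{i}\Bigl(\tfrac{1}{3}\sum_{t_i\in\F_3}\zeta_3^{t_i(M_nv)_i}\Bigr),
\]
so $(\#\Sel_\phi(E_n))^k$ expands as a sum indexed by $k$-tuples of column vectors $v_1,\dots,v_k$ and test parameters $t_{1,\bullet},\dots,t_{k,\bullet}$, with summand equal to a product of cubic residue symbols in the prime factors of $n$. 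Interchanging with the sum over $n\in\mathcal{D}(N)$, the all-zero tuple produces the diagonal main term $\#\mathcal{D}(N)$, while every other tuple, after an application of cubic reciprocity in $\Z[\zeta_3]$, contributes a cubic Dirichlet character sum $\sum_{n\in\mathcal{D}(N)}\chi(n)$ for a nontrivial character $\chi$ whose conductor depends only on the tuple.

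I would bound these character sums using a Siegel--Walfisz-type estimate for cubic characters (or Heath-Brown's cubic large sieve) combined with the standard M\"obius-over-cubes sieve enforcing the cubefree constraint, which gives an individual saving of $N(\log N)^{-A}$ per tuple. The main obstacle is uniformity in $n$: since $M_n$ has dimension $\asymp\omega(n)\sim\log\log N$, the number of tuples in the expansion grows with $n$, so one cannot afford to lose more than a polylogarithmic factor per tuple. I would control this by stratifying tuples by the size of their support in the primes of $n$, using Erd\H{o}s--Kac-type normal-order bounds to restrict to $n$ with $\omega(n)\le C\log\log N$ and discarding the rare exceptional set directly. A subtler point is that some tuples induce the principal cubic character on a subset of the primes of $n$, creating spurious secondary main terms that must be cancelled by an inclusion-exclusion over the cubefree support, interacting with the case split in the definition of $\cf(2n)$. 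The final exponent $-\tfrac{1}{3}+\epsilon$ in $(\log N)^{-\frac{1}{3}+\epsilon}$ reflects the $1/3$-density savings inherent to the cubic-character and cubefree sieves, and matches the heuristic $\omega_1(n)\approx\tfrac12\log\log n$ discussed after~\eqref{eq:selhatlb}.
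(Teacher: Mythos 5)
Your reduction of the first equality to \eqref{eq:tamratio} is fine, and your overall template (expand the Selmer count via orthogonality of cubic characters, raise to the $k$-th power, swap with the sum over $n$, isolate a main term, bound the rest) is the strategy the paper actually uses, following Heath-Brown and Klys. But two of your key steps, as written, would fail. First, after interchanging summation you do not obtain, for each nonzero tuple, a character sum $\sum_{n\le N}\chi(n)$ with $\chi$ of conductor depending only on the tuple. The expansion replaces the sum over $n$ by a multilinear sum over coprime factorization variables $D_\bu$, $\bu\in\F_3^{2k}$, with $\prod_\bu D_\bu=\cf(2n)$, and the cubic symbols $\leg{D_\bu}{d^{(1)}_\bv}_3$ couple pairs of these variables; the relevant conductors are the variables themselves and range up to $N$, so a Siegel--Walfisz input alone is useless for most configurations. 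The paper must dissect by the sizes of the $D_\bu$ and treat three regimes separately: two large linked variables (a bilinear large sieve for cubic symbols, Lemma~\ref{lemma:largesieve}), one large variable linked to a small one (where the conductor is genuinely bounded and Lemma~\ref{lemma:SW} applies), and configurations with few large variables (Shiu's theorem, with no character cancellation at all). Your plan collapses these into a single case that does not exist.

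Second, identifying the main term with ``the all-zero tuple'' and patching the ``spurious secondary main terms'' by inclusion-exclusion misses the combinatorial heart of the argument. The configurations contributing to the main term are precisely those supported on a maximal unlinked set of indices; by Klys's classification these are cosets $\mathbf{a}+V$ with $\dim V=k$, and the constraint $A^{(2)}_\bu=1$ for $\bu_2\neq\mathbf{0}$ forces the unique surviving set $\F_3^k\times\{\mathbf{0}\}$, over which the leftover symbols $\leg{\rho}{D_\bu}_3$ are summed to give exactly $3^{k\delta_\eta}\#\mathcal{D}_\eta(N)$; without this structure theorem you cannot certify that no other family of principal-character configurations inflates the constant. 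Relatedly, your account of the exponent $-\tfrac13$ is not where it comes from: it is not a density saving from cubic characters or the cubefree sieve, but the worst case $r+s=3^k+3^{k-1}$ of the Shiu-type bound $N(\log N)^{\frac12(\frac{r+s}{3^k}-2)+\epsilon}$ for configurations with few large variables in~\eqref{eq:fewlarge} --- a purely multiplicative-function loss with no oscillation involved.
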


Observe that Theorem~\ref{theorem:phihatSelmer} implies~\eqref{eq:selden} and completes the proof of Theorem~\ref{theorem:main}.
With~\eqref{eq:tamratio}, estimating the moments of $\#\Sel_{\phi}(E_n)$ is the same as estimating the moments of $3^{-(\delta_n+\omega_2(\cf(2n)))}\#\Sel_{\hphi}(\hE_n)$. 
We will prove Theorem~\ref{theorem:phihatSelmer} in Section~\ref{section:phihatSel} by estimating only the weighted moments of $\#\Sel_{\hphi}(\hE_n)$, but we remark that it is also possible to do a similar computation on the moments of $\#\Sel_{\phi}(E_n)$ directly without invoking~\eqref{eq:tamratio}.

In Heath-Brown's work~\cite{HBSelmer1,HBSelmer}, the solvability conditions that determines the elements in the $2$-Selmer group are in terms of quadratic residue symbols. In our case, the $3$-isogeny Selmer elements are controlled by conditions involving cubic residue symbols. To treat the sums, we adapt work of Klys~\cite{Klys} on the distribution of the $\ell$-part of class groups of degree $\ell$ cyclic fields. Klys' work is a generalization of a result by Fouvry--Kl\"{u}ners~\cite{FK4rank} on the distribution of the $4$-part of the class group of quadratic fields, which follows a line of attack similar to work of Heath-Brown~\cite{HBSelmer1,HBSelmer}.

 The estimates in Theorem~\ref{theorem:phihatSelmer} suggest that $\rank \Sel_{\hphi}(\hE_n)- \omega_2(\cf(2n))+\delta_n=\rank \Sel_{\phi}(E_n)$ converges to the Dirac delta distribution. Similar distributions have been observed previously in certain problems related to the $2$-part of class groups~\cite{AK,FKP,KMS}, and for the $2$-Selmer groups of quadratic twists of elliptic curves over a fixed quadratic extensions~\cite{MP}.

It is known that the distribution of $2$-Selmer ranks diverges in certain families of elliptic curves by averaging the lower bound coming from the Tamagawa ratio~\cite{KLO1,KLO2} and by estimating the order of magnitude of the average size of the $2$-Selmer group~\cite{Yu2sel2}.
For elliptic curves with $3$-isogenies, Alp\"{o}ge, Bhargava, and Shnidman~\cite{ABS} proved that the average size of the $2$-Selmer group of $y^2=x^3+dn^2$ is $3$, for any fixed integer $d\neq 0$. By averaging the Tamagawa ratio of the curves with $n\leq N$, they showed that the average size of the $3$-isogeny Selmer group has a lower bound of the order $(\log N)^{\frac{1}{3}}$ when $d$ and $-3d$ are not squares, and order $\log N$ when $-3d$ is a square. When $d$ is a square, they predicted that the average should be bounded.
From Theorem~\ref{theorem:phihatSelmer}, we see that the average size of $\Sel_{\phi}(E_n)$ is $1$.
 As we will see in Section~\ref{section:regularmoments}, with some slight modification to the proof of Theorem~\ref{theorem:phihatSelmer}, it is possible to obtain the moments of $\#\Sel_{\hphi}(\hE_n)$, which in particular shows that the average size of $\Sel_{\hphi}(\hE_n)$ is asymptotic to a constant multiple of $\log N$. We summarize the result as follows.

\begin{theorem}\label{theorem:usualhatmoments}
For any $\epsilon>0$ and any positive integer $k$, we have
\begin{equation}\label{eq:regularmoments}
 \frac{1}{\#\mathcal{D}(N)}\sum_{n\in\mathcal{D}(N)} (\#\Sel_{\hphi}(\hE_n))^k=c_k(\log N)^{\frac{1}{2}(3^k-1)}+O_{k,\epsilon}\left((\log N)^{\frac{3^{k}}{2}-\frac{5}{6}+\epsilon}\right),
\end{equation}
where $c_k$ is a positive constant depending only on $k$ given in \eqref{eq:ck}.
\end{theorem}

The distribution of the $2$-Selmer ranks of quadratic twists of elliptic curves over $\Q$ with full two-torsion converges to some non-constant distribution~\cite{HBSelmer1,HBSelmer,Kane}. In contrast, the size of the $3$-isogeny Selmer groups considered here are almost always fixed after removing the dependencies on $\omega_2(\cf(2n))$ and the congruence class of $n$ modulo $9$. 
Kane and Thorne~\cite{KT} obtained the distribution the $2$-isogeny Selmer groups of the quartic twists $y^2 = x^3-dx$, where $d$ is restricted to a family for which the Tamagawa ratio of the isogeny is fixed. In our setting, the Tamagawa ratio is controlled by $\omega_2(n)$ and $\delta_n$ in~\eqref{eq:tamratio}, and fixing the Tamagawa ratio would likely imply a non-constant distribution as in~\cite[Theorem~1.3]{KT} because of the restrictions on the dimensions of the matrices of cubic residue symbols.

\section{Fundamental \texorpdfstring{$3$}{3}-isogeny descent}
We will take the explicit description of the $3$-isogeny Selmer elements of $E_n$ and $\hE_n$ given by Cohen and Pazuki in~\cite[Sections~3 and~4]{CohenPazuki}.
The fundamental $3$-descent map 
\[\theta:E_n(\Q)\rightarrow \Q^\times/(\Q^{\times})^3\times \Q^\times/(\Q^{\times})^3 \]
is defined by $\theta(O) = (1,1)$, $\theta((0,n)) = ((2n)^2,2n)$, $\theta((0,-n)) = (2n,(2n)^2)$, and 
\[\theta((x,y))=(y-n,y+n)\]
for the non-torsion points of $E_n(\Q)$.

Let $K\coloneqq\Q(\sqrt{-3})$ and $\rho\coloneqq\frac{-1+\sqrt{-3}}{2}$.
The fundamental $3$-descent map on $\hE_n$ is defined as
\[\hat{\theta}:\hE_n(\Q)\rightarrow G_3\times G_3 \qquad
(x,y)\mapsto\left(y-3n\sqrt{-3} ,y+3n\sqrt{-3}\right),\]
where $G_3$ is the subgroup of classes of elements of $K^{\times}/(K^{\times})^3$ whose norm is a cube. 
Note that
$\im(\theta)\cong E_n(\Q)/\hphi(\hE_n(\Q))$ and 
$\im(\hat{\theta})\cong \hE_n(\Q)/\phi(E_n(\Q))$.
Moreover, since the product of the two entries in any tuple $\theta((x,y))$ is always in $\Q^{\times})^3$, the map $\theta$ is defined through only the first coordinate of its output. Similarly, $\hat{\theta}$ is defined through only the first coordinate of its output.

\subsection{Local solvability conditions}
The elements in $\Sel_{\hphi}(\hE_n)$ and $\Sel_{\phi}(E_n)$ can be detected by checking the local solvability of certain homogeneous cubic polynomials at every place of $\Q$. 
We will put the cubics in~\eqref{eq:explicitSelhat} and~\eqref{eq:explicitSel} into more convenient forms.
Define
\[\mathfrak{D}\coloneqq\left\{(A,B,C)\in\Z_{\geq 1}^3: A,B,C \text{ pairwise coprime and cubefree}\right\}.
\]

We first make some small modifications to the equation of the cubics we saw in~\eqref{eq:explicitSelhat} and~\eqref{eq:explicitSel} that were from~\cite{CohenPazuki}.

\begin{lemma}\label{lemma:selrep}
Let $n\in \mathcal{D}$.
Then $\theta_0\in \im\theta$ if and only if there exists some pairwise coprime positive integers $A,B,C$ such that $ABC=\cf(2n)$, $\theta_0=(AB^2,A^2B)$, and 
\begin{equation}\label{eq:cubicABC}
 AX^3+BY^3+CZ^3=0
\end{equation}
has a nontrivial integer solution.
\end{lemma}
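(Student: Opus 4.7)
The plan is to translate the membership $\theta_0 \in \im \theta$ into a statement about the prime factorization of the class $[y - n]$ in $\Q^\times/(\Q^\times)^3$: the triple $(A, B, C)$ records how this class distributes among the primes dividing $\cf(2n)$.

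For the forward direction, let $P = (x, y) \in E_n(\Q)$ be non-torsion. Then $(y-n)(y+n) = x^3$ forces $[y - n]$ and $[y + n]$ to be mutually inverse in $\Q^\times/(\Q^\times)^3$. Writing $(x, y) = (r/e^2, s/e^3)$ in lowest terms with $\gcd(r, e) = \gcd(s, e) = 1$, the identity becomes $(s - ne^3)(s + ne^3) = r^3$; comparing $v_p$ of the product (which is $3 v_p(r)$) with $v_p\bigl((s + ne^3) - (s - ne^3)\bigr) = v_p(2ne^3)$ forces $v_p(y - n) \equiv 0 \pmod 3$ for every prime $p \nmid 2n$, and also for $p = 2$ in the exceptional case $v_2(n) = 2$. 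Hence the primes supporting the class $[y - n]$ are contained in those of $\cf(2n)$. For each such prime $p$, write $e_p := v_p(\cf(2n)) \in \{1, 2\}$; the three options $(v_p(A), v_p(B), v_p(C)) \in \{(e_p, 0, 0), (0, e_p, 0), (0, 0, e_p)\}$ realise the three residues $e_p, 2e_p, 0$ for $v_p(A) + 2 v_p(B) \pmod 3$, which exhaust $\{0, 1, 2\}$ since $\gcd(e_p, 3) = 1$; exactly one choice matches $v_p(y - n) \pmod 3$. Making this choice prime-by-prime yields pairwise coprime positive cubefree $A, B, C$ with $ABC = \cf(2n)$ and $(AB^2, A^2B) \equiv (y - n, y + n) \pmod{(\Q^\times)^3}$.

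To produce the integer solution, write $y - n = AB^2 r^3$ and $y + n = A^2 B s^3$ for $r, s \in \Q^\times$, and let $m_0 \in \Z_{\geq 1}$ satisfy $2n = \cf(2n)\, m_0^3$. Subtracting and dividing by $AB$ gives $A s^3 - B r^3 = C m_0^3$, so $(s, -r, -m_0)$ is a rational solution to $AX^3 + BY^3 + CZ^3 = 0$; clearing denominators yields a nontrivial integer solution. The torsion cases $P \in \{O, (0, n), (0, -n)\}$ are handled directly, via the choices $(A, B, C) = (1, 1, \cf(2n)), (1, \cf(2n), 1), (\cf(2n), 1, 1)$ and the trivial solutions $(1, -1, 0), (-1, 0, 1), (0, -1, 1)$ respectively. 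For the converse, given $(A, B, C) \in \mathfrak{D}$ with $ABC = \cf(2n)$ and a nontrivial integer solution $(X, Y, Z)$: if $Z = 0$, then $A X^3 = -B Y^3$ with coprime positive cubefree $A, B$ forces $A = B = 1$ by a prime-by-prime cube consideration, recovering $\theta_0 = (1, 1) = \theta(O)$. Otherwise set $r = Y m_0/Z$, $s = -X m_0/Z$, $x = AB r s$, $y = AB^2 r^3 + n$, and verify from $As^3 - Br^3 = C m_0^3$ that $y = A^2 B s^3 - n$ as well, and that $(y - n)(y + n) = (AB r s)^3 = x^3$, so $(x, y) \in E_n(\Q)$ with $\theta((x, y)) = (AB^2, A^2 B) = \theta_0$.

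The main obstacle is the $2$-adic analysis in the exceptional case $v_2(n) = 2$: ruling out intermediate $2$-adic valuations of $s \pm ne^3$ in $s^2 = r^3 + n^2 e^6$ requires a finite case check on the parities of $r, s, e$, exploiting that $v_2(r^3) \equiv 0 \pmod 3$ constrains several subcases. Once past this, the rest of the proof is a formal unwinding of the prime-by-prime parametrization.
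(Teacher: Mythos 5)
Your argument is correct in substance, but it takes a genuinely different route from the paper. The paper's proof is a two-line reduction: it cites Cohen--Pazuki's Theorem~3.1, which already asserts that $\theta_0\in\im\theta$ if and only if $ab^2X^3+a^2bY^3+2nZ^3=0$ is nontrivially solvable for squarefree coprime positive $a,b$ with $ab\mid 2n$ and $\theta_0=(ab^2,a^2b)$, and then converts that cubic into the pairwise-coprime form with $ABC=\cf(2n)$ via the explicit substitution $(A,B,C)=\bigl(a\gcd(b,c)^2/\gcd(a,c),\ b\gcd(a,c)^2/\gcd(b,c),\ c/\gcd(ab,c)\bigr)$, $c=\cf(2n)/ab$. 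You instead reprove the descent input from scratch: the valuation analysis of $(s-ne^3)(s+ne^3)=r^3$, the prime-by-prime construction of $(A,B,C)$ using that $v_p(\cf(2n))\in\{1,2\}$ is invertible mod $3$, the subtraction identity $As^3-Br^3=Cm_0^3$ with $2n=\cf(2n)m_0^3$, and the reverse construction of a rational point from a solution. This is longer but self-contained, and it lands directly on the coprime normal form without passing through the squarefree one; what it costs is the local bookkeeping that the citation absorbs. Two loose ends should be tied off. First, the deferred $2$-adic check when $v_2(n)=2$ does work as you predict --- from $v_2(2ne^3)=3$ and $v_2(s-ne^3)+v_2(s+ne^3)=3v_2(r)$ one gets $\min\bigl(v_2(s-ne^3),v_2(s+ne^3)\bigr)\in\{0,3\}$, forcing both valuations to be multiples of $3$ --- but it must be written out, since this is the only place where a prime not dividing $\cf(2n)$ could a priori carry the class $[y-n]$. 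Second, in the converse direction the degenerate solutions with $X=0$ or $Y=0$ (not only $Z=0$) need separate treatment: your formulas then give $x=0$, where $\theta$ is defined by the torsion rule rather than by $(y-n,y+n)$, and one checks directly that $\theta_0$ equals $\theta((0,\pm n))$ in those cases.
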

\begin{proof}
By~\cite[Theorem 3.1]{CohenPazuki}, $\theta_0\in \im\theta$ if and only if $ab^2X^3+a^2bY^3+2nZ^3=0$ has a nontrivial integer solution, where $a,b$ are squarefree coprime positive integers such that $\theta_0=(ab^2,a^2b)$ and $ab\mid 2n$. 
Write $c=\cf(2n)/ab$, and 
\[(A,B,C)=
\left(\frac{a\gcd(b,c)^2}{\gcd(a,c)},
\frac{b\gcd(a,c)^2}{\gcd(b,c)},
\frac{c}{\gcd(ab,c)}\right)\in\mathfrak{D}.
\]
This substitution gives the cubic in~\eqref{eq:cubicABC}.
\end{proof}

\begin{lemma}
Let $n\in\mathcal{D}$.
Then $\theta_0\in \im\hat{\theta}$ if and only if there exists $\alpha\in\OO_K$ such that $\alpha,\overline{\alpha},\frac{2n}{\alpha\overline{\alpha}}$ are pairwise coprime in $\OO_K$, $\theta_0=(\alpha\overline{\alpha}^2,\alpha^2\overline{\alpha})$, and 
\[
 \alpha(X + Y\sqrt{-3})^3 + \overline{\alpha}(X -Y\sqrt{-3} )^3 + \frac{2n}{\alpha\overline{\alpha}}Z^3 = 0
\]
has a nontrivial integer solution.
\end{lemma}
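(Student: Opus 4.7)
The argument parallels that of Lemma~\ref{lemma:selrep}, the only real difference being that we work with ideals in $\OO_K$ rather than with rational integers. I would begin by applying the dual version of~\cite[Theorem 3.1]{CohenPazuki}: it characterises $\im\hat\theta$ as the set of $\theta_0$ of the form $(\alpha_0\overline{\alpha_0}^2,\alpha_0^2\overline{\alpha_0})$, where $\alpha_0\in\OO_K$ is squarefree with $N(\alpha_0)\mid 2n$, and such that
\[
\alpha_0(X+Y\sqrt{-3})^3 + \overline{\alpha_0}(X-Y\sqrt{-3})^3 + \gamma_0 Z^3 = 0,\qquad \gamma_0 := \tfrac{2n}{\alpha_0\overline{\alpha_0}},
\]
has a nontrivial integer solution. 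Observe that the $G_3$-class of $\alpha_0$ is unchanged when $\alpha_0$ is multiplied by a cube in $\OO_K$; this is exactly the flexibility that will be exploited to impose pairwise coprimality.

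Next, I would mimic the substitution from the proof of Lemma~\ref{lemma:selrep} to clear any common ideal factors among $\alpha_0$, $\overline{\alpha_0}$ and $\gamma_0$. Each such common factor lies above a rational prime dividing $\cf(2n)$, and can be absorbed into the corresponding cube variable by setting
\[
\alpha := \frac{\alpha_0\,\gcd(\overline{\alpha_0},\gamma_0)^2}{\gcd(\alpha_0,\overline{\alpha_0})\,\gcd(\alpha_0,\gamma_0)}
\]
(and the conjugate formula for $\overline\alpha$), where the ideal gcds are interpreted via fixed generators in the PID $\OO_K$. The resulting unit ambiguity is harmless since every unit of $\OO_K$ is a cube. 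After the accompanying rescalings of $X+Y\sqrt{-3}$, $X-Y\sqrt{-3}$ and $Z$ by these gcd factors, the triple $(\alpha,\overline\alpha,2n/(\alpha\overline\alpha))$ is pairwise coprime in $\OO_K$, the $G_3$-class of $\alpha$ agrees with that of $\alpha_0$, and the transformed cubic is precisely the one in the statement. The reverse direction is then immediate: given $\alpha$ satisfying the conditions of the lemma, the displayed cubic is already of Cohen--Pazuki type, so any nontrivial integer solution witnesses $\theta_0\in\im\hat\theta$.

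The main obstacle I anticipate is bookkeeping: I must verify that the substitution is $\Gal(K/\Q)$-equivariant, so that rational integer solutions on one side correspond to rational integer solutions on the other (rather than merely to $\OO_K$-solutions). For common factors above ramified or split primes of $\Q$ this requires choosing the generators of the gcds compatibly with complex conjugation — for instance, a self-conjugate generator for the ramified prime above $3$, and rational generators for the symmetric contributions at split and inert primes. Once these generator choices are made, Galois-equivariance is automatic from the symmetric form of the equation, and both implications of the \emph{iff} follow as in Lemma~\ref{lemma:selrep}.
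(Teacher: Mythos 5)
Your proposal is correct and follows essentially the same route as the paper: invoke the Cohen--Pazuki description of $\im\hat\theta$ and then absorb common factors via a gcd substitution (the paper replaces $\alpha$ by $\alpha\gcd(\overline{\alpha},\tfrac{2n}{\alpha\overline{\alpha}})^2/\gcd(\alpha,\tfrac{2n}{\alpha\overline{\alpha}})$). The only cosmetic difference is that the paper cites \cite[Corollary 4.3]{CohenPazuki}, which already supplies $\gcd(\alpha,\overline{\alpha})=1$, so the extra factor $\gcd(\alpha_0,\overline{\alpha_0})$ in your denominator is unnecessary.
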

\begin{proof}
By~\cite[Corollary 4.3]{CohenPazuki},
any $\theta_0\in\im\hat{\theta}$ can be put in the form $\theta_0=(\alpha\overline{\alpha}^2,\alpha^2\overline{\alpha})$, where $\alpha\in\OO_K$ is such that $\alpha\overline{\alpha}\mid 2n$ and $\gcd(\alpha,\overline{\alpha})=1$. Since $2$ is inert in $\OO_K$, we have $\alpha\overline{\alpha}\mid \cf(2n)$.
Furthermore, we can assume that $\alpha\overline{\alpha}$ and $\frac{2n}{\alpha\overline{\alpha}}$ are coprime, otherwise replace $\alpha$ with 
\[\frac{\alpha\gcd(\overline{\alpha},\frac{2n}{\alpha\overline{\alpha}})^2}{\gcd(\alpha,\frac{2n}{\alpha\overline{\alpha}})}.
\]
\end{proof}

Use Lemma~\eqref{lemma:selrep} to rewrite the cubic in \eqref{eq:explicitSelhat}, so 
\[\begin{split}
\#\Sel_{\hphi}(\hE_n)=\#\{(A,B,C)\in\mathfrak{D}: ABC=\cf(2n),\ AX^3+BY^3+CZ^3=0\\ \text{ is locally solvable everywhere}\}.
\end{split}\]
To study the local solvability of 
$AX^3+BY^3+CZ^3=0$,
we summarize below the solvability conditions taken from~\cite[Section~5]{CohenPazuki}.

\begin{lemma}\label{lemma:localcond}
Suppose $(u_1,u_2,u_3)\in\mathfrak{D}$.
Then the equation 
\[u_1X^3+u_2Y^3+u_3Z^3=0\]
has a nontrivial solution in $\Q_p$ if and only if one of the following is satisfied:
\begin{itemize}
\item
$p\neq 3$ and $p\nmid u_1u_2u_3$;
\item
$p\neq 3$, $p\mid u_i$, and $u_j/u_k$ is a cube in $\F_p^\times$, where $\{i,j,k\}=\{1,2,3\}$; 
\item $p=3$, $3\nmid u_1u_2u_3$, and $u_i/u_j\equiv \pm 1\bmod 9$ for some $i\neq j$;
\item $p=3$ and $v_3(u_i) = 1$ for some $i$;
\item $p=3$, $v_3(u_i) = 2$, and $u_j/u_k\equiv \pm 1\bmod 9$, where $\{i,j,k\}=\{1,2,3\}$.
\end{itemize}
\end{lemma}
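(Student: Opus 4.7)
The plan is a case-by-case analysis on $p$ and on the valuations $v_p(u_i)$, reducing in each case to either a Weil-style point count or an explicit Hensel lift, with the obstruction measured by a cubic residue condition.

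For the smooth case ($p\neq 3$ and $p\nmid u_1u_2u_3$), the equation defines a smooth plane curve of genus one over $\F_p$, which by the Hasse--Weil bound has at least $p+1-2\sqrt{p}>0$ points for every $p\geq 2$; any such smooth $\F_p$-point lifts to $\Q_p$ by Hensel's lemma.

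For the case $p\neq 3$ with $p\mid u_i$ (say $i=1$, by pairwise coprimality), a primitive $\Z_p$-solution $(X,Y,Z)$ must satisfy $p\nmid YZ$, so reducing modulo $p$ forces $(Y/Z)^3\equiv -u_3/u_2 \bmod p$; since $-1\in(\F_p^\times)^3$ this is equivalent to $u_j/u_k$ being a cube in $\F_p^\times$. Conversely, a cube root $s\in\Z_p^\times$ of $u_3/u_2$ (obtained by Hensel from a cube root in $\F_p^\times$) yields the explicit solution $(0,-s,1)$.

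The remaining cases all have $p=3$ and rely on the fact that cubes in $\Z_3^\times$ are exactly the classes $\pm 1\bmod 9$, so $X^3\equiv\pm 1\bmod 9$ for every $X\in\Z_3^\times$. For each configuration of $(v_3(u_1),v_3(u_2),v_3(u_3))$ one enumerates the admissible valuation profiles of a primitive solution and then reduces the equation modulo a high enough power of $3$. In the subcase $3\nmid u_1u_2u_3$, primitivity forces either all of $X,Y,Z$ to be units or exactly one of them to have positive $3$-adic valuation, and the resulting congruence $\epsilon_1u_1+\epsilon_2u_2+\epsilon_3u_3\equiv 0\bmod 9$ with $\epsilon_i\in\{\pm 1\}$ is shown by direct enumeration in $(\Z/9\Z)^\times$ to be equivalent to $u_i/u_j\equiv\pm 1\bmod 9$ for some pair $i\neq j$; the subcase $v_3(u_i)=1$ is always locally solvable via an explicit trial point exploiting the extra factor of $3$; and the subcase $v_3(u_i)=2$ reduces, after clearing $9$ from the corresponding monomial, to a $\bmod\,9$ analysis of $u_j$ and $u_k$. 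The converse implications are produced by Hensel-lifting a suitable mod-$3^N$ solution with $N$ chosen so that the non-degenerate variable satisfies the strong Hensel hypothesis. The main obstacle is precisely this $p=3$ analysis, where the vanishing of the derivative $3X^2$ modulo $3$ forces us to work modulo $27$ and to carry out a completely exhaustive case split on $v_3(X),v_3(Y),v_3(Z)$ against $v_3(u_1),v_3(u_2),v_3(u_3)$; in practice the cleanest path is to import the computation of Cohen--Pazuki~\cite[Section~5]{CohenPazuki} directly rather than reproduce it.
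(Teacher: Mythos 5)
The paper gives no proof of this lemma at all: it is stated as a summary of the solvability conditions from~\cite[Section~5]{CohenPazuki}, so your closing remark --- that the cleanest path is to import the Cohen--Pazuki computation --- is in fact exactly what the paper does. Your self-contained sketch is the standard Selmer-type argument and is sound in outline: smooth reduction plus Hasse--Weil and Hensel when $p\nmid 3u_1u_2u_3$ (note $p+1-2\sqrt{p}>0$ even for $p=2$), the unit-cube computation for $p\mid u_i$ with $p\neq 3$ (primitivity forces $p\nmid YZ$ since the $u_i$ are cubefree and pairwise coprime), and the fact that $(\Z_3^\times)^3=\{x:x\equiv\pm1\bmod 9\}$ driving the wild case.

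One step as written is not correct, however. In the subcase $p=3$, $3\nmid u_1u_2u_3$, you assert that the congruence $\epsilon_1u_1+\epsilon_2u_2+\epsilon_3u_3\equiv 0\bmod 9$ with $\epsilon_i\in\{\pm 1\}$ is equivalent to the condition $u_i/u_j\equiv\pm1\bmod 9$ for some $i\neq j$. It is not: for $(u_1,u_2,u_3)=(1,2,7)$ one has $u_2/u_3\equiv -1\bmod 9$, and the equation is solvable over $\Q_3$ (take $X=0$ and $Y/Z$ a $3$-adic cube root of $-7/2\equiv 1\bmod 9$), yet no choice of signs makes $\pm1\pm2\pm7\equiv 0\bmod 9$. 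The three-term signed congruence is only the necessary condition arising from the all-units valuation profile; the profile in which exactly one variable is divisible by $3$ contributes the separate two-term condition $u_j/u_k\equiv\pm1\bmod 9$, and it is the union of the conditions from the two profiles that is equivalent to the criterion in the lemma. (A short enumeration of $(\Z/9\Z)^\times$ shows the all-units congruence does imply the ratio condition, but the converse fails, as the example shows.) With that repair, and with the usual care that the mod-$9$ or mod-$27$ necessary conditions must be upgraded to exact solutions --- easiest via explicit points such as $(X,Y,0)$ once a unit ratio is known to lie in $(\Z_3^\times)^3$ --- your sketch agrees with the computation you propose to cite.
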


\subsection{Matrix for \texorpdfstring{$\hphi$}{phihat}-Selmer}\label{section:redei}

In this section, we will interpret $\Sel_{\hphi}(\hE_n)$ and $\Sel_{\phi}(E_n)$ using linear algebra and give a more explicit derivation of the lower bound in~\eqref{eq:selhatlb}.
We construct matrices consisting of cubic residue symbols to study $\Sel_{\hphi}(\hE_n)$. These matrices are reminiscent of the matrices constructed by R\'{e}dei to study the $4$-rank of class groups of quadratic fields~\cite{RedeiMatrix}. For the $3$-isogeny Selmer groups here, some similar construction also appeared in~\cite{FJKRTX}. 

For any irreducible element $\pi$ in $\OO_K$ that does not divide $3$, denote the cubic residue symbol modulo $\pi$ by $\leg{\cdot}{\pi}_3$. For any $\alpha\in \OO_K$ that is coprime to $\pi$, the symbol $\leg{\alpha}{\pi}_3$ takes a value in $\langle \rho\rangle=\{1,\rho,\overline{\rho}\}$ such that
\[\alpha^{\frac{\N(\pi)-1}{3}}\equiv \leg{\alpha}{\pi}_3\bmod \pi.\]
 We extend this multiplicatively to any element $w$ in $\OO_K$ that does not divide $3$, by the formula $\leg{\cdot}{w}_3=\prod_j\leg{\cdot}{\pi_j}_3$, where $w$ is factored into irreducible elements $w=\prod_j \pi_j$. 
See for example \cite[Section~4]{LemReciprocity} for properties of the cubic residue symbol.

Notice that if $[u]\in\Sel_{\hphi}(\hE_n)$, then we also have $[2n/u],[(2n)^2/u]\in\Sel_{\hphi}(\hE_n)$ because of the three-torsion points on $E_n$.
Therefore we can always find a representative $u\in\Z$ that is not divisible by $3$ for any class in $\Sel_{\hphi}(\hE_n)/E_n(\Q)[3]$. By Lemma~\ref{lemma:selrep}, we can also assume $u$ has only prime factors that divides $\cf(2n)$.

Label the distinct prime factors of $\cf(2n)$, except possibly $3$, as $p_1,p_2,\dots, p_{\ell},\dots, p_m$, where $p_1\equiv\dots\equiv p_{\ell}\equiv 1\bmod 3$ and $p_{\ell+1}\equiv\dots\equiv p_{m}\equiv 2\bmod 3$. Take $p_0\coloneqq 3$.
For each $p_i \equiv 1\bmod 3$, fix a choice of irreducible elements $\pi_i\equiv 1\bmod 3$ in $\OO_K$ above $p_i$. For $p_i \equiv 2\bmod 3$, take $\pi_i\coloneqq p_i$. Take $q_i\coloneqq p_i^{v_{p_i}(2n)}$.
Define $r_{ij}\in\F_3$ such that
\[\rho^{r_{ij}}=
\begin{cases}
 \hfil\leg{\rho}{q_j}_3&\text{if }i=0,\\
 \hfil\leg{q_j}{\pi_i}_3 &\text{if }i\neq 0\text{ or }j,\\
\leg{2n/q_i}{\pi_i}^2_3 &\text{if }i=j\neq 0.
\end{cases}\]
Define the matrix 
\[
R=
\begin{cases}
(r_{ij})_{0\leq i\leq \ell,1\leq j\leq m} &\text{if }v_3(n)=0\text{ or }2,\\
(r_{ij})_{1\leq i\leq \ell,1\leq j\leq m} &\text{if }v_3(n)=1.
\end{cases}
\]

Write $\mathbf{e}=(e_1,\dots, e_m)\in \F_3^m$. It is straightforward to check that when $i\neq 0$, the $i$-th entry $(R\mathbf{e})_i$ of $R\mathbf{e}$ satisfies 
\[\rho^{(R\mathbf{e})_i}=\begin{cases}
 \hfil \leg{u}{\pi_i}_3 &\text{if }e_i=0\\
 \leg{(2n)^{e_i}/u}{\pi_i}^{2}_3 &\text{if }e_i\neq 0,
\end{cases}
 \]
 where $u=\prod_{j=1}^m q_j^{e_j}$.
 From the $i=0$ row entry of $R\mathbf{e}$, \[\rho^{(R\mathbf{e})_0}=\leg{\rho}{u}_3.\]
Hence the kernel of the matrix of $R$ contains $\mathbf{e}\in \F_3^m$ such that for every $i\neq 0$, we have
\[\begin{cases}
 \hfil \leg{u}{\pi_i}_3=1 &\text{if }p_i\nmid u,\\
 \hfil \leg{(2n)^{e_i}/u}{\pi_i}_3=1 &\text{if }p_i\mid u,\\
\end{cases}
 \]
 and further that $\leg{\rho}{u}_3=1$ if $v_3(n)=0$ or $2$.
By the first supplementary law of cubic reciprocity~\cite[p.~215]{LemReciprocity}, $\leg{\rho}{u}_3=1$ is equivalent to $u\equiv\pm 1\bmod 9$.
For any $p\equiv 2\bmod 3$, any integer is a cube modulo $p$. Therefore for the corresponding cubic~\eqref{eq:cubicABC} to be everywhere locally solvable, we only need to consider the local conditions at primes dividing $n$ that are congruent to $1\bmod 3$ and at the prime $3$. By \cite[Proposition~4.1(iii)]{LemReciprocity}, to check that an integer $u$ is a cube modulo a prime $p\equiv 1\bmod 3$, it suffices to check that $\leg{u}{\pi}_3=1$ for a choice of irreducible $\pi\in\OO_K$ above $p$.
Therefore the conditions for $\mathbf{e}$ to be in $\ker R$ are precisely the requirements for $u$ to represent a class in $\Sel_{\hphi}(\hE_n)$ by Lemma~\ref{lemma:localcond}.

Suppose $\mathbf{e}\in\ker R$. If $2n\equiv\pm 1\bmod 9$, then $u,2n/u, (2n)^2/u$ all satisfies $\pm 1\bmod 9$. If $2n\equiv\pm 2$ or $\pm 4 \bmod 9$, exactly one of $u,2n/u, (2n)^2/u$ satisfies $\pm 1\bmod 9$.
If $3\mid n$, exactly one of $u,2n/u, (2n)^2/u$ has no valuation at $3$.
Therefore 
\begin{equation}\label{eq:kerR}
 \ker R\cong
\begin{cases}
 \hfil \Sel_{\hphi}(\hE_n) &\text{if }2n\equiv\pm 1\bmod 9,\\
\Sel_{\hphi}(\hE_n)/E_n(\Q)[3]&\text{otherwise.}
\end{cases}
\end{equation}
From the dimension of the matrix $R$, we see that 
\begin{equation}\label{eq:corankR}
\dim\ker R
=m-\rank R
\geq 
\begin{cases}
m-\ell -1&\text{if }v_3(n)=0\text{ or }2,\\
 \hfil m-\ell &\text{if }v_3(n)=1.
\end{cases}
\end{equation}
Combining~\eqref{eq:kerR} and~\eqref{eq:corankR} we get the lower bound for the rank of $\Sel_{\hphi}(\hE_n)$ in~\eqref{eq:selhatlb}.

\subsection{Matrix for \texorpdfstring{$\phi$}{phi}-Selmer}
Now we relate $\Sel_{\phi}(E_n)$ to $\ker R^T$.
The $(i,j)$-entry of $R^T$ is $(R^T)_{ij}=r_{ji}\in\F_3$ such that
\[\rho^{r_{ji}}=
\begin{cases}
 \hfil\leg{\rho}{q_i}_3&\text{if }j=0,\\
 \hfil\leg{q_i}{\pi_j}_3 &\text{if }j\neq 0\text{ or }i,\\
\leg{2n/q_i}{\pi_i}^2_3 &\text{if }i=j\neq 0.
\end{cases}\]
When $j\neq 0$ or $i$, by cubic reciprocity
\[ \leg{p_i}{\pi_j}_3 =\leg{\pi_j}{p_i}_3
=\begin{cases}
\leg{\pi_j}{\pi_i\overline{\pi}_i}_3
= \leg{\pi_j}{\pi_i}_3\leg{\overline{\pi}_j}{\pi_i}_3^2
=\leg{\pi_j\overline{\pi}_j^2}{\pi_i}_3
&\text{if }p_i\equiv 1\bmod 3,\\
\leg{\pi_j}{p_i}_3
=\leg{\pi_j}{p_i}_3^2\leg{\overline{\pi}_j}{p_i}_3
=\leg{\pi_j\overline{\pi}_j^2}{p_i}_3^2 
&\text{if }p_i\equiv 2\bmod 3.
\end{cases}\]
Write $\pi_0\coloneqq\rho$.
We have
\[\rho^{r_{ji}}=
\begin{cases}
 \hfil\leg{\pi_j\overline{\pi}_j^2}{\pi_i}_3^{v_{p_i}(2n)} &\text{if }j\neq i\text{ and } 1\leq i\leq \ell,\\
 \hfil \leg{\pi_j\overline{\pi}_j^2}{p_i}_3^{2v_{p_i}(2n)}&\text{if }\ell+1\leq i\leq m,\\
\leg{2n/\pi_j\overline{\pi}_j^2}{\pi_i}^{2v_{p_i}(2n)}_3 &\text{if }i=j\neq 0.
\end{cases}\] 
Write 
\[
u=
\begin{cases}
\prod_{j=0}^{\ell} (\pi_j\overline{\pi}_j^2)^{e_jv_{p_j}(2n)} &\text{if }v_3(n)=0\text{ or }2,\\
\prod_{j=1}^{\ell} (\pi_j\overline{\pi}_j^2)^{e_jv_{p_i}(2n)} &\text{if }v_3(n)=1,
\end{cases}
\]
and
\[
\mathbf{e}=
\begin{cases}
(e_0,\dots,e_{\ell})\in\F_3^{\ell+1} &\text{if }v_3(n)=0\text{ or }2,\\
(e_1,\dots,e_{\ell})\in\F_3^{\ell} &\text{if }v_3(n)=1.
\end{cases}
\]
The $i$-th entry $(R^T\mathbf{e})_i$ of $R^T\mathbf{e}$ satisfies 
\[\rho^{(R^T\mathbf{e})_i}=\begin{cases}
 \hfil \leg{u}{\pi_i}_3 &\text{if }e_i=0\\
 \leg{(2n)^{e_i}/u}{\pi_i}^{2}_3 &\text{if }e_i\neq 0.
\end{cases}
 \]
Now the kernel of the matrix of $R^T$ can be identified with the elements $u$ such that
\[\begin{cases}
 \hfil \leg{u}{\pi_i}_3=1 &\text{if }\pi_i\nmid u,\\
 \hfil \leg{(2n)^{e_i}/u}{\pi_i}_3=1 &\text{if }\pi_i\mid u,
\end{cases}
 \]
 which are precisely the local conditions for $u$ to represent a class in $\Sel_{\phi}(E_n)$ by~\cite[Section~6]{CohenPazuki}, except possibly at the prime $3$. In fact, the local conditions at $3$ are already encoded in the matrix. 
Summing up the rows of $R^T$, we see that any $\mathbf{e}\in\ker R^T$ satisfies
\[\begin{cases}
\hfil \leg{\rho}{2n}_3^{e_0}=1 &\text{if }v_3(n)=0,\\
\leg{\rho}{2n/3^2}_3^{e_0}\prod_{j=1}^{\ell}\leg{3}{\pi_j}_3^{e_j}=1&\text{if }v_3(n)=2,\\
\hfil \prod_{j=1}^{\ell}\leg{3}{\pi_j}_3^{2e_j}=1&\text{if }v_3(n)=1.
\end{cases}
\]
Applying properties of the cubic residue symbol~\cite[Theorem~7.8]{LemReciprocity}, the equations boil down to the local solvability conditions at the prime $3$ in~\cite[Lemma~6.11]{CohenPazuki}.
 Therefore we conclude that $\ker R^T\cong\Sel_{\phi}(E_n)$.
This allows us to deduce~\eqref{eq:tamratio} from the fact that $\rank R=\rank R^T$.
 
\section{Analytic tools}
This section sets up the analytic results required in the moment computations in Section~\ref{section:phihatSel}.
\subsection{Some classical theorems}
We recall some classical theorems which we will use.
\begin{lemma}[{\cite[Lemma A, p. 265]{HR}}]\label{lemma:HR}
There exists some $B>0$ such that 
\[\#\{1\leq n\leq N:\omega(n)=\ell\}\ll \frac{N}{\log N}\cdot \frac{(\log\log N + B)^{\ell-1}}{(\ell-1) !}\]
uniformly for all positive integers $\ell$ and $N\geq 2$.
\end{lemma}

\begin{lemma}[{\cite[Theorem~1]{Shiu}}]\label{lemma:Shiu}
Fix $0<\epsilon<1$ and some positive constant $A$.
Let $f$ be a multiplicative function such that $f(p^{\ell})\leq A$ for every prime $p$ and $\ell\geq 1$.
Then
\[\sum_{N-M<n\leq N}f(n)\ll_{A,\epsilon}
\frac{M}{\log N}\exp\left(\sum_{p\leq N}\frac{f(p)}{p}\right)
\]
uniformly for
$2\leq N^{\epsilon}\leq M< N$.
\end{lemma}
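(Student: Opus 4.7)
The plan is to follow Shiu's original strategy, which combines a mean-value theorem for multiplicative functions of Levin--Fainleib type with an upper-bound sieve to localize the sum to the short interval $(x-y,x]$. The crucial feature enabling this approach is the hypothesis $y \geq x^{1-\epsilon}$, which keeps the interval long enough for sieve-theoretic methods to remain effective; genuinely short intervals (say $y = x^{1/2}$) would require substantially deeper techniques.

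First, I would establish the long-interval bound
\[\sum_{n \leq x} f(n) \ll \frac{x}{\log x} \exp\left(\sum_{p \leq x} \frac{f(p)}{p}\right),\]
which is the theorem of Levin and Fainleib. The standard argument starts from a convolution identity of the form $f(n) \log n = \sum_{ab=n} f(a) g(b)$, where $g$ is supported on prime powers, and combines Abel summation with an iterative Gronwall-type bootstrap that converts a pointwise bound on $f$ at primes into the full mean-value estimate. Second, to localize to the short interval, I would decompose each $n \in (x-y,x]$ as $n = ab$ where $a$ is $z$-smooth and $b$ has all prime factors exceeding $z$, for a parameter $z$ to be optimized as a small power of $x$. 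For each fixed $a$, the fundamental lemma of the sieve bounds the number of admissible $b$ in the resulting interval by a main term proportional to $(y/a)/\log z$ plus a sieve remainder $R_a$. Weighting by $f(a)$, summing over $a$, and applying the first step to the resulting convolution sum then yields the claimed bound.

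The main obstacle is controlling the aggregate sieve remainder: each divisor $a$ contributes an error that can be as large as $O(3^{\omega(a)})$, and one must absorb the total into the main term. The condition $y \geq x^{1-\epsilon}$ is what makes this feasible, since it permits a sieve truncation at level close to $y$, after which a Mertens-type estimate handles the cumulative remainder. A secondary subtlety is that the hypothesis $f(p^k) \leq A$ bounds $f$ only on prime powers rather than at $f(p)$ directly; any contribution from integers with many repeated small prime factors is controlled via the trivial bound $f(n) \leq A^{\Omega(n)}$ together with standard estimates on the density of integers with many prime factors.
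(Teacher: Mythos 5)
The paper gives no proof of this lemma; it is quoted verbatim from Shiu's Theorem~1, and your outline is precisely Shiu's own strategy (Levin--Fainleib mean value bound plus a smooth/rough factorization $n=ab$ and an upper-bound sieve whose remainders are absorbed using $y\geq x^{1-\epsilon}$), so the approach is correct. One small caution: from $f(p^{\ell})\leq A$ the bound you actually get is $f(n)\leq A^{\omega(n)}$, not merely $A^{\Omega(n)}$; the latter is useless for $A\geq 2$ (e.g.\ $n=2^k$ gives $A^{\Omega(n)}=n^{\log A/\log 2}$, and $\sum_{n\leq x}A^{\Omega(n)}$ is not $O(x(\log x)^{O(1)})$), whereas $\sum_{n\leq x}A^{\omega(n)}\ll x(\log x)^{A-1}$ is what you need to control the exceptional integers with many prime factors.
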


We will need a version of the Siegel--Walfisz theorem for character sums.
\begin{lemma}[{\cite{Goldstein}}]\label{lemma:SW}
Let $\epsilon>0$. Let $\chi$ be a nontrivial finite Hecke character of $K=\Q(\sqrt{-3})$ with conductor $\mathfrak{f}$. There exists a positive constant $c_{\epsilon}$ such that
\[
\sum_{\substack{\p\subset\OO_K\text{ prime}\\ \N(\p)\leq N\\\p\nmid \mathfrak{f}}}
\chi(\p)\ll_{\epsilon}
|\N(\mathfrak{f})|^{\epsilon} N(\log N)^2
\exp\left(-\frac{c_{\epsilon}(\log N)^{\frac{1}{2}}}{|\N(\mathfrak{f})|^{\epsilon}}\right),
\]
where $\N$ denotes the norm relative to the extension $K/\Q$.
\end{lemma}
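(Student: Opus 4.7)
The statement is the Siegel--Walfisz theorem for Hecke character sums over prime ideals of $K = \Q(\sqrt{-3})$, and the plan is to adapt the classical approach over $\Q$ (via $L$-functions, zero-free regions, and Siegel's theorem) to the Hecke $L$-function setting, following Goldstein.

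First I would reduce the prime sum to a prime-power von Mangoldt sum. Define the ideal-theoretic von Mangoldt function $\Lambda_K(\n)$ (equal to $\log \N\p$ if $\n = \p^k$ and $0$ otherwise), and put
\[
\psi(x,\chi) \;=\; \sum_{\substack{\n \subset \OO_K \\ \N\n \leq x \\ (\n,\mathfrak{f})=1}} \Lambda_K(\n)\,\chi(\n).
\]
The contribution to $\psi(x,\chi)$ from proper prime powers $\p^k$ with $k\geq 2$ is trivially $O(\sqrt{x}\log x)$. Then partial summation and the trivial bound $\sum_{\N\p\leq x}\log\N\p \ll x$ convert an estimate for $\psi(x,\chi)$ into the claimed estimate for $\sum_{\N\p\leq x}\chi(\p)$ with the stated error term, so it suffices to bound $\psi(x,\chi)$.

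Next I would apply Perron's formula to the Dirichlet series $-\frac{L'}{L}(s,\chi) = \sum_{(\n,\mathfrak{f})=1}\Lambda_K(\n)\chi(\n)\,\N\n^{-s}$, writing
\[
\psi(x,\chi) \;=\; -\frac{1}{2\pi i}\int_{c-iT}^{c+iT} \frac{L'}{L}(s,\chi)\,\frac{x^s}{s}\,ds \;+\; (\text{Perron truncation error}),
\]
with $c = 1 + 1/\log x$ and a suitable choice of $T$. I would then shift the contour to a line $\Re s = 1 - \eta$ inside the standard de la Vall\'ee Poussin zero-free region for Hecke $L$-functions, namely
\[
\eta \;=\; \frac{c'}{\log(\N(\mathfrak{f})(|T|+2))}
\]
for an absolute constant $c'>0$. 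The residue at $s=1$ is zero because $\chi$ is nontrivial, and the remaining contribution from the shifted contour, combined with standard convexity bounds for $L'/L$ on the boundary of the zero-free region, yields a bound of the form $x\exp(-c_1\sqrt{\log x})$ times a polynomial factor in $\log \N(\mathfrak{f})$, provided $L(s,\chi)$ has no exceptional real zero.

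The main obstacle is, as always in Siegel--Walfisz, the possible Siegel zero: a real zero $\beta$ of $L(s,\chi)$ very close to $s=1$. Here I would invoke Siegel's theorem in the Hecke setting, which asserts that for any $\epsilon>0$ there is an ineffective $c_\epsilon>0$ with $\beta \leq 1 - c_\epsilon \N(\mathfrak{f})^{-\epsilon}$; the corresponding residue contributes at most $x^\beta/\beta \ll x\exp(-c_\epsilon \N(\mathfrak{f})^{-\epsilon}\log x)$, which is $\ll x\exp(-c_\epsilon' (\log x)^{1/2}/\N(\mathfrak{f})^\epsilon)$ once one balances against the zero-free region error. Optimising the choice of $T$ (for instance $\log T \asymp \sqrt{\log x}$) and absorbing the polynomial factors in $\log\N(\mathfrak{f})$ into $\N(\mathfrak{f})^\epsilon$ gives the stated bound
\[
\sum_{\substack{\p \subset \OO_K \\ \N\p \leq x \\ (\p,\mathfrak{f})=1}} \chi(\p) \;\ll_\epsilon\; \N(\mathfrak{f})^\epsilon\, x (\log x)^2 \exp\!\left(-\frac{c_\epsilon (\log x)^{1/2}}{\N(\mathfrak{f})^\epsilon}\right).
\]
The only genuinely nontrivial ingredient beyond routine contour manipulation is Siegel's theorem for Hecke $L$-functions, which produces the ineffective constant $c_\epsilon$ and the $\N(\mathfrak{f})^\epsilon$ dependence.
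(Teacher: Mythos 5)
The paper gives no proof of this lemma; it is quoted directly from Goldstein's generalization of the Siegel--Walfisz theorem, and your sketch (reduction to a von Mangoldt sum, Perron's formula, contour shift into the de la Vall\'ee Poussin zero-free region for Hecke $L$-functions, and Siegel's theorem in the Hecke setting to control the possible exceptional real zero, yielding the ineffective constant $c_\epsilon$) is precisely the argument of that cited reference. Your proposal is correct and takes essentially the same approach.
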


\subsection{A large sieve}
Recall that $K=\Q(\sqrt{-3})$ and we denote the norm relative to the extension $K/\Q$ by $\N$.
We prove a version of~\cite[Theorem~2]{HBcubic} that allows us to bound sums of the cubic residue symbols over cubefree but not necessarily squarefree elements in $\OO_K$. We first prove a lemma we need about the oscillations of the cubic residue symbols.
\begin{lemma}\label{lemma:embedding}
Suppose $\alpha\in \OO_K$, $3\nmid\N(\alpha)$ and $\alpha\neq 0$,
then
\[
\sum_{\xi\in (\Z/\N(\alpha)\Z)^\times}\left(\frac{\xi}{\alpha}\right)_3 
=
\begin{cases}
\#(\Z/\N(\alpha)\Z)^\times& \text{if }\alpha\in\langle\rho\rangle\cdot\Z\cdot (\OO_K)^3,\\
 \hfil0&\text{otherwise.}
\end{cases}
\]
\end{lemma}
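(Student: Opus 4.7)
The plan is to view the sum as the pairing of $\chi_\alpha\colon\xi\mapsto\leg{\xi}{\alpha}_3$ with the trivial character on the finite abelian group $G=(\Z/\N(\alpha)\Z)^\times$. Once $\chi_\alpha$ is shown to be a well-defined homomorphism $G\to\mu_3$, orthogonality of characters gives immediately that the sum is $|G|$ if $\chi_\alpha$ is trivial and $0$ otherwise, so the task reduces to identifying exactly when $\chi_\alpha\equiv 1$. Well-definedness follows from $\N(\alpha)=\alpha\overline{\alpha}\in(\alpha)$ (so the symbol only depends on $\xi\bmod\N(\alpha)$) together with the observation that for $\xi\in\Z$ one has $\gcd(\xi,\N(\alpha))=1$ in $\Z$ iff $\gcd(\xi,\alpha)=1$ in $\OO_K$, since every rational prime dividing $\N(\alpha)$ sits below a prime of $\OO_K$ dividing $\alpha$; multiplicativity is inherited from the cubic residue symbol.

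For the easy implication, if $\alpha=\beta^3$ then $\chi_\alpha=\chi_\beta^3\equiv 1$. If instead $\alpha\in\Z$, I would factor $\alpha$ over the rational primes and handle each factor separately: at a split prime $p=\pi\overline{\pi}$, the identity $\leg{\xi}{\overline{\pi}}_3=\overline{\leg{\xi}{\pi}_3}$ for real $\xi$ gives $\leg{\xi}{p}_3=\bigl|\leg{\xi}{\pi}_3\bigr|^2=1$; at an inert prime $p\equiv 2\pmod 3$, a direct computation yields $\xi^{(p^2-1)/3}=(\xi^{p-1})^{(p+1)/3}\equiv 1\pmod p$; the ramified prime above $3$ is treated by analogous exponent bookkeeping.

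The converse is the main obstacle. Writing $\alpha=u\prod_\pi\pi^{e_\pi}$ and restricting $\chi_\alpha$ to the subgroup $(\Z/p\Z)^\times$ of $G$ for each rational prime $p$, the fact that $\leg{\cdot}{\pi}_3$ has exact order $3$ on $\F_p^\times$ for a split prime $\pi$ above $p$ forces $e_\pi\equiv e_{\overline{\pi}}\pmod 3$ at every split pair, while inert and ramified factors give no further local constraint on $\Z$. These congruences show that the ideal $(\alpha)$ is the extension to $\OO_K$ of a rational ideal times the cube of some ideal, and the final step is to upgrade this from ideals to elements by tracking the unit $u$ modulo $(\OO_K^\times)^3$ and the cube factor, so that $\alpha$ is forced into one of the two cases of the lemma. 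The delicate prime-by-prime accounting at the ramified prime above $3$ and at the unit group is where I expect to need to be most careful.
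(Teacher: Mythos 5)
Your orthogonality framework is the same engine as the paper's, but the two arguments detect triviality of $\chi_\alpha$ differently. The paper writes $\alpha=m\cdot\beta$ with $m\in\Z$ and $\beta$ a product of split primes no two of which are conjugate. The integer part drops out at once because $\leg{\xi}{m}_3=1$ for coprime rational integers (this single fact packages your split/inert/ramified case analysis), and the key step is then that the natural map $(\Z/\N(\beta)\Z)^\times\to(\OO_K/\beta\OO_K)^\times$ is a bijection, so the sum over \emph{rational} residues becomes a full character sum over $(\OO_K/\beta\OO_K)^\times$, which vanishes whenever $\leg{\cdot}{\beta}_3$ is nontrivial, i.e.\ whenever $\beta$ is not a cube. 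Your direct prime-by-prime analysis on $(\Z/\N(\alpha)\Z)^\times$ reaches the equivalent local condition $e_\pi\equiv e_{\overline{\pi}}\bmod 3$ at each split pair (via $\leg{\xi}{\overline{\pi}}_3=\overline{\leg{\xi}{\pi}_3}$ for rational $\xi$ and surjectivity of $\leg{\cdot}{\pi}_3$ onto $\mu_3$ on $\F_p^\times$), and is perfectly workable; the paper's bijection just does all split pairs at once.

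The place where your plan genuinely cannot be completed as written is the final ``upgrade to elements,'' and your instinct that this is the delicate point is correct: the condition for $\chi_\alpha$ to be trivial is that the split, conjugate-unbalanced part of $\alpha$ is a unit times a cube, i.e.\ $\alpha\in\Z\cdot\OO_K^\times\cdot(\OO_K)^3$, which is strictly larger than ``$\alpha\in\Z$ or $\alpha\in(\OO_K)^3$.'' For instance $\alpha=2\pi^3$ with $\pi$ split has $\leg{\xi}{\alpha}_3=\leg{\xi}{2}_3\leg{\xi}{\pi}_3^3=1$ for every rational $\xi$ coprime to $\N(\alpha)$, yet $\alpha$ is neither rational nor a cube. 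So do not try to force $\alpha$ into exactly the two listed cases; prove instead the vanishing statement (the sum is $0$ unless every split prime and its conjugate divide $\alpha$ to exponents congruent mod $3$), which is what the paper's proof actually establishes and what is used in the large sieve. Two smaller points: $(\Z/p\Z)^\times$ is a quotient of $(\Z/\N(\alpha)\Z)^\times$, not a subgroup, so phrase the localisation via the CRT decomposition into factors $(\Z/p^{a}\Z)^\times$; and the cubic residue symbol is not defined with the ramified prime $\sqrt{-3}$ in the denominator, so rather than ``exponent bookkeeping'' there you should simply restrict to $\alpha$ coprime to $3$, as holds in the application where $\alpha=w_1w_2^2$ with $w_i\equiv 1\bmod 3$.
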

\begin{proof}
We can write $\alpha=m\cdot \beta$, where $m$ is an integer and $\beta$ is a product of split primes $(\pi)$ in $\OO_K$, such that none of $(\overline{\pi})$ divides $\beta$.
Since $\xi\in (\Z/\N(\alpha)\Z)^\times$ and $m$ are coprime integers in $\Z$, $\leg{\xi}{m}_3=1$ by a property of the cubic residue symbol (see for example~\cite[Proposition~9.3.4]{IrelandRosen}).
Then 
\[\leg{\xi}{\alpha}_3=\leg{\xi}{\beta}_3.\]
Moreover $(\Z/\N(\alpha)\Z)^\times\rightarrow (\Z/\N(\beta)\Z)^\times$ is surjective because $\N(\beta)\mid \N(\alpha)$.

Now consider the natural map
$(\Z/\N(\beta)\Z)^\times\rightarrow (\OO_K/\beta\OO_K)^{\times}$
that takes any integer representative to its class modulo $\beta\OO_K$. We will show that this map is bijective.
If $a$ and $b$ are both integers and $a\equiv b\bmod \beta\OO_K$, then we also have $a\equiv b\bmod \overline{\beta}\OO_K$, so $\N(\beta)\mid a-b$.
It follows that the map must be injective. Since $\#(\Z/\N(\beta)\Z)^\times=\# (\OO_K/\beta\OO_K)^{\times}=\N(\beta)\prod_{p\mid \N(\beta)}(1-p^{-1})$, the map is a bijection.
Therefore as long as $(\beta)$ is not a cube of an ideal in $\OO_K$, we have
\[
\sum_{\xi\in (\Z/\N(\beta)\Z)^\times}\left(\frac{\xi}{\beta}\right)_3 
=\sum_{\xi\in (\OO_K/\beta\OO_K)^{\times}}\left(\frac{\xi}{\beta}\right)_3
= 0.
\]
If $\beta\in(\OO_K)^3$, then clearly $\left(\frac{\xi}{\beta}\right)_3=1$ for every $\xi\in (\Z/\N(\beta)\Z)^\times$. 
This completes the proof.
\end{proof}

 We are now ready to prove the large sieve result we need. For simplicity of the proof, the exponents in Lemma~\ref{lemma:largesieve} are not optimal, but still sufficient for our application.
\begin{lemma}\label{lemma:largesieve}
Let $(\alpha_w)_w$ and $(\beta_z)_z$ be two sequences of complex numbers with $|\alpha_w|\leq 1$ and $|\beta_z|\leq 1$. Then there exists an absolute constant $C>0$ such that
\begin{equation}\label{eq:BMN}
B_1(M, N) \coloneqq \sum_{\substack{w\in\OO_K\, \text{cubefree}\\ \N(w)\leq M \\ w\equiv 1\bmod 3}}\ \sum_{\substack{z\in\OO_K\, \text{cubefree}\\ \N(z)\leq N \\ z\equiv 1\bmod 3}}\alpha_w\beta_z\left(\frac{z}{w}\right)_3 
\ll
(M^{-\frac{1}{16}}+N^{-\frac{1}{16}})
MN(\log MN)^C.
\end{equation}If instead $w$ ranges over cubefree positive integers while $z$ still ranges over cubefree elements of $\OO_K$, we have
\begin{equation}\label{eq:BMNint}
B_2(M, N) \coloneqq \sum_{\substack{w\in\Z\, \text{cubefree}\\1\leq w\leq M }}\ \sum_{\substack{z\in\OO_K\, \text{cubefree}\\\N(z)\leq N \\ z\equiv 1\bmod 3}}\alpha_w\beta_z\left(\frac{z}{w}\right)_3 
\ll
(M^{-\frac{1}{16}}+N^{-\frac{1}{28}})
MN(\log MN)^C.
\end{equation}
\end{lemma}
\begin{proof}
We follow the argument in \cite[Proposition~4.3]{KR}, which is a generalisation of \cite[Section~21]{FI1} and \cite[Proposition~3.6]{KM16}. 
To show~\eqref{eq:BMN}, by cubic reciprocity we can assume that $N\geq M$.
Fix an integer $k \geq 6$, and apply H\"{o}lder's inequality to the variable $w$ with $\frac{k-1}{k}+\frac{1}{k}=1$ to get
\[
|B_1(M, N)|^{k} \ll \left(\sum_w|\alpha_w|^{\frac{k}{k-1}}\right)^{k-1}\sum_{w}\left|\sum_{z}\beta_z\left(\frac{z}{w}\right)_3\right|^{k}
.\]
We drop the condition that $\N(z)\leq N$ and $z$ being cubefree from the sum by adapting the convention that $\beta_z=0$ whenever $\N(z)>N$ or $z$ is not cubefree.
Expand the inner sum in the second factor,
\begin{equation}\label{eq:BMN1}
|B_1(M, N)|^{k} \ll M^{k-1}\sum_{w}\sigma_w\sum_{\N(z)\leq N^k}\beta_z'\left(\frac{z}{w}\right)_3,
\end{equation}
where
\[\sigma_w=\left(\frac{\left|\sum_{z}\beta_z\left(\frac{z}{w}\right)_3\right|}{\sum_{z}\beta_z\left(\frac{z}{w}\right)_3}\right)^k \quad\text{ and }\quad
\beta_z' = \sum_{\substack{z_1\cdots z_{k}=z \\ z_1\equiv\cdots\equiv z_{k}\equiv 1\bmod 3 }}\beta_{z_1}\beta_{z_2}\cdots \beta_{z_{k}}.
\]

Noting that $z$ has at most $2\omega(\N(z))$ prime factors, and $\beta_{z_1}\dots\beta_{z_k}$ is only nonzero when $z_1,\dots,z_k$ are all cubefree, there are $\leq (3^k)^{2\omega(\N(z))}$ ways to write $z$ as a product of $k$ cubefree ideals, so $|\beta_z'|\leq 9^{k\omega(\N(z))}$.

Applying the Cauchy--Schwarz inequality to the $z$ variable in~\eqref{eq:BMN1}, we get
 \begin{equation}\label{eq:CS2}
\left|\sum_{z}\beta_z'\sum_{w}\sigma_w\left(\frac{z}{w}\right)_3\right|^2
 \leq
\left(\sum_{z} \left|\beta_z'\right|^2\right)\left(\sum_{z}\left|\sum_{w}\sigma_w\left(\frac{z}{w}\right)_3\right|^2\right)
\end{equation}

Since a product of $k$ cubefree ideals must be $(2^k+1)$-power free,
we can bound the first sum in~\eqref{eq:CS2} by
\begin{align*}
\sum_{z} \left|\beta_z'\right|^2
&\leq \sum_{n\leq N^k}\#\{z\in\OO_K:\N(z)=n,\ z\text{ is }(2^k+1)\text{-powerfree}\}\cdot 9^{2k\omega(n)}\\
&\leq \sum_{n\leq N^k}
((2^k+1)9^{2k})^{\omega(n)}\ll N^k(\log N)^{C_k},
\end{align*}
where $C_k$ is a positive integer depending only on $k$, and the final inequality is by applying Lemma~\ref{lemma:Shiu}.
Putting this back to~\eqref{eq:CS2} and expanding the inner sum, we have
\[
\begin{split}
\left|\sum_{z}\beta_z'\sum_{w}\sigma_w\left(\frac{z}{w}\right)_3\right|^2
& \ll
N^k(\log N)^{C_k}\sum_{z}
\left(
\sum_{w_1}\sigma_{w_1}\left(\frac{z}{w_1}\right)_3
\sum_{w_2}\overline{\sigma_{w_2}}\overline{\left(\frac{z}{w_2}\right)_3}\right)\\
& \ll
N^k(\log N)^{C_k}
\sum_{w_1}\sum_{w_2}\sigma_{w_1}\overline{\sigma_{w_2}}\sum_{z}\left(\frac{z}{w_1w_2^2}\right)_3.
\end{split}\]
Break up the sum over $z$ into congruence classes $\xi$ modulo $w_1w_2^2$. 
We have
\begin{equation}\label{eq:xiclass}
\sum_{\xi\bmod w_1w_2^2}\left(\frac{\xi}{w_1w_2^2}\right)_3 = 0
\end{equation}
unless $w_1w_2^2\in\langle\rho\rangle\cdot (\OO_K)^3$. This gives
\[
\sum_{\N(z)\leq N^k}\left(\frac{z}{w_1w_2^2}\right)_3\ll
\begin{cases} 
 \hfil N^{k} & \text{if }w_1w_2^2\in\langle\rho\rangle\cdot (\OO_K)^3,\\
M^3N^{\frac{k}{2}} + M^6 & \text{otherwise.}
\end{cases}
\]
Since we took $k\geq 6$ and since $N\geq M$, we have $N^{\frac{k}{2}}\geq M^3$, so the last bound can be simplified to $M^3N^{\frac{k}{2}}$. Hence~\eqref{eq:BMN1} becomes
\[\begin{split}
|B_1(M, N)|^{k} 
&\ll
M^{k-1}(N^k(\log N)^{C_k}
(MN^k+
M^2M^3N^{\frac{k}{2}}))^{\frac{1}{2}}\\
&\ll
(\log N)^{C_k}
(M^{k-\frac{1}{2}}N^{k}+
M^{k+\frac{3}{2}}N^{\frac{3k}{4}})
.
\end{split}\]
Finally, taking $k=8$ we get $|B_1(M, N)| 
\ll
M^{-\frac{1}{16}}
MN(\log N)^{C_8}$. This proves~\eqref{eq:BMN}.

We now turn to the proof of~\eqref{eq:BMNint}.
Assuming $N\geq M$, the proof for $B_2(M,N)$ is the same as the above proof for $B_1(M,N)$.
Now consider the case when $N<M$, we swap the variables $w$ and $z$, and instead look at 
\[B_2(N, M) = \sum_{\substack{\N(w)\leq N \\ w\equiv 1\bmod 3}}
\sum_{z\leq M }
\alpha_w\beta_z\left(\frac{z}{w}\right)_3.\]
The proof proceed as before, but apply Lemma~\ref{lemma:embedding} in place of~\eqref{eq:xiclass} to get
\[
\sum_{\xi\bmod \N(w_1w_2^2)}\left(\frac{\xi}{w_1w_2^2}\right)_3 = 0,
\]
 unless $w_1w_2^2\in\langle\rho\rangle\cdot \Z\cdot (\OO_K)^3$. When $w_1w_2^2\in\langle\rho\rangle\cdot \Z\cdot (\OO_K)^3$ holds, $\N(w_1w_2^2)$ is a product of a square and a cube, there are $\ll M^{\frac{3}{2}}$ many distinct $\N(w_1w_2^2)\leq M^3$. For each value of $n=\N(w_1w_2^2)$, there are $\leq 2\omega(n)$ distinct prime factors of $n$ in $\OO_K$. Since $w_1$ and $w_2$ are cubefree, and $w_1\equiv w_2\equiv 1\bmod 3$ fixes a unique generator for each of the ideals $(w_1)$ and $(w_2)$, there are $\leq 3^{2\omega(n)}$ many possible $w_1$ and $w_2$ given $n$.
Therefore
\begin{align*}
&\#\{(w_1,w_2)\in\OO_K^2: w_1w_2^2\in\langle\rho\rangle\cdot \Z\cdot (\OO_K)^3,\ \N(w_1w_2^2)\leq M^3,\ w_1,w_2\text{ cubefree},\ w_1\equiv w_2\equiv 1\bmod 3\}\\
&\leq \sum_{n\leq M^{\frac{3}{2}}}3^{2\omega(n)}\ll M^{\frac{3}{2}}(\log M)^{C_k},\end{align*}
where we have applied again Lemma~\ref{lemma:Shiu}.
Evaluating the sum $B_2(N,M)$ using this estimate for $\sum_{w_1}\sum_{w_2} 1$, we get
\[\begin{split}
|B_2(N, M)|^{k} 
&\ll
M^{k-1}\left(N^k(\log N)^{C_k}
(M^{\frac{3}{2}}(\log N)^{C_k}N^k+
M^2M^3N^{\frac{k}{2}})\right)^{\frac{1}{2}}\\
&\ll
(\log N)^{C_k}
(M^{k-\frac{1}{4}}N^{k}+
M^{k+\frac{3}{2}}N^{\frac{3k}{4}})
.
\end{split}\]
Take instead $k=7$ in this case. This gives $B_2(N, M) \ll M^{-\frac{1}{28}}MN(\log MN)^{C_7}$.
\end{proof}

\section{Moments of the size of the weighted \texorpdfstring{$\hphi$}{phihat}-Selmer-Selmer}\label{section:phihatSel}
The goal of this section is to prove Theorem~\ref{theorem:phihatSelmer}.
Define
\begin{align*}
 \mathcal{D}_\eta &\coloneqq\{D\in\mathcal{D}:D\equiv \eta \bmod 9\},\\
\mathcal{D}_\eta (N)&\coloneqq\{D\in\mathcal{D}_\eta :D\leq N\}.
\end{align*}

Our first task is to set up an expression for $(\#\Sel_{\hphi}(\hE_n))^k$, which we can handle using the machinery by Heath-Brown \cite{HBSelmer1,HBSelmer}.
We put together the solvability conditions for the cubic~\eqref{eq:cubicABC} from Lemma~\ref{lemma:localcond}.
Fix an integer $\eta\in\{0,\pm1,\pm2,\pm3,\pm4\}$.
Suppose $n\in \mathcal{D}_\eta$.
For each prime $p\mid n$ that is congruent to $1\bmod 3$, we fix an irreducible element $\pi\in\OO_K$ above $p$ that satisfies $\pi\equiv 1\bmod 3$. 
If we take an integer $m\in\Z$ that is coprime to $\pi$, observe that
\[1+\leg{m}{\pi}_3+\leg{m}{\pi}_3^2=
\begin{cases}
3&\text{if }m\text{ is a cube}\bmod \pi,\\
0&\text{otherwise}.
\end{cases}\]
With this observation, we can now package the required local conditions.
For any $(A,B,C)\in\mathfrak{D}$ such that $3\nmid BC$ and $ABC=\cf(2n)$, define
\[\begin{split}
g(A,B,C)&\coloneqq
\frac{1}{3^{\varepsilon_\eta }}\left(1+\leg{\rho}{B/C}_3+\leg{\rho}{B/C}_3^2\right)^{\varepsilon_\eta }
\prod_{\substack{p\mid A\\p\equiv 1\bmod 3}}
\frac{1}{3}
\left(1+\leg{B/C}{\pi^{v_p(A)}}_3
+\leg{B/C}{\pi^{v_p(A)}}_3^2\right)\\
&\prod_{\substack{p\mid B\\p\equiv 1\bmod 3}}
\frac{1}{3}
\left(1+\leg{C/A}{\pi^{v_p(B)}}_3
+\leg{C/A}{\pi^{v_p(B)}}_3^2\right)
\prod_{\substack{p\mid C\\p\equiv 1\bmod 3}}
\frac{1}{3}
\left(1+\leg{A/B}{\pi^{v_p(C)}}_3
+\leg{A/B}{\pi^{v_p(C)}}_3^2\right)\\
&=
\begin{cases}
1 &
\parbox[t]{.8\textwidth}{if $AX^3+BY^3+CZ^3=0$ has nontrivial local solutions \\ everywhere and ($B\equiv\pm C\bmod 9$ if $\varepsilon_\eta =1$),}\\
0 &\text{otherwise,}
\end{cases}
\end{split}
\]
where
\[\varepsilon_\eta \coloneqq
 \begin{cases}
 0&\text{if }\eta\equiv \pm 3 \bmod 9,\\
 1&\text{otherwise}.
 \end{cases}
 \]
We can view $g(A,B,C)$ as a product of indicator functions, each checks the local solubility condition at a prime dividing $3ABC$.
 
We wish to sum $g(A,B,C)$ over all factorization of $\cf(2n)$ such that $3\nmid BC$.
We first address the undercounting caused by the condition $3\nmid BC$ and ($B\equiv\pm C\bmod 9$ if $\varepsilon_\eta =1$), which is tracked by $g(A,B,C)$ but not a required solvability condition of the cubic by Lemma~\ref{lemma:localcond}.
When $2n\equiv \pm 1\bmod 9$, the condition $B/C\equiv \pm 1\bmod 9$ is satisfied by all triples in $\Sel_{\hphi}(\hE_n)$.
If $2n\equiv 0,\pm 2,\pm 3,\pm 4\bmod 9$, assuming $3\nmid BC$ and $B/C\equiv \pm 1\bmod 9$ means that $\sum g(A,B,C)$ is only counting $1/3$ of $\#\Sel_{\phi}(E_n)$. Therefore we need an extra factor of $3$ when $\eta\equiv 0,\pm 1,\pm 2,\pm 3\bmod 9$.
This allows us to write
\[\#\Sel_{\hphi}(\hE_n)
=3^{\delta_\eta +\varepsilon_\eta }\sum_{\substack{(A,B,C)\in\mathfrak{D}\\
ABC=\cf(2n)\\ 3\nmid BC}}g(A,B,C),
\]
where $\delta_\eta$ is as defined in~\eqref{eq:deltan}.

Now expand the product $g(A,B,C)$ as a sum. The product over $p\mid A$ can be rewritten as
\[\prod_{\substack{p\mid A\\p\equiv 1\bmod 3}}
\frac{1}{3}
\left(1+\leg{B/C}{\pi^{v_p(A)}}_3
+\leg{B/C}{\pi^{v_p(A)}}_3^2\right)
=\frac{1}{3^{\omega_1(A)}}\sum_{\substack{D_{00},D_{01},D_{02}\text{ pairwise coprime}\\
p\mid D_{01}D_{02}\Rightarrow p\equiv 1\bmod 3
\\
D_{00}D_{01}D_{02}=A}}
\leg{B/C}{D_{01}}_3
\leg{B/C}{D_{02}}_3^2.
\]
The products over $p\mid B$ and $p\mid C$ can be expanded similarly.
With the notations $A=D_{00}D_{01}D_{02}$, $B=D_{10}D_{11}D_{12}$, $C=D_{20}D_{21}D_{22}$ for the variables in the expansion, we get
\begin{equation}\label{eq:hsel1}
\#\Sel_{\hphi}(\hE_n)=
3^{\delta_\eta -\omega_1(n)}
\sum_{\lambda\in\F_3^{\varepsilon_\eta }}
\sum_{(D_{\bu})}
\prod_\bu \leg{\rho}{D_{\bu}}_3^{R_1(\lambda,\bu)}
\prod_{\bu,\bv}
\leg{D_{\bu}}{d^{(1)}_\bv}_3^{\Phi_1(\bu,\bv)}
,
\end{equation}
where $\Phi_1(\bu,\bv)=v_2(u_1-v_1)$, $R_1(\lambda,\bu)=\lambda u_1$, and the sum is taken over all $D_{\bu}$, $\bu=(u_1,u_2)\in\F_3^2$ such that 
\begin{itemize}
\item $\cf(2n)=\prod_{\bu\in\F_3^2}D_{\bu}$, where $D_{\bu}$ are pairwise coprime positive integers;
\item $d^{(1)}_{\bu}=\prod_{p\equiv 1\bmod 3}\pi^{v_p(D_{\bu})}\in \OO_K$;
\item $D_\bu$ is not divisible by any prime $p\equiv 1\bmod 3$ if $u_2\neq 0$;
\item $3\nmid D_{\bu}$ if $\bu\neq\mathbf{0}$.
\end{itemize}

Next we raise~\eqref{eq:hsel1} to the $k$th-power as in~\cite[Section~2]{HBSelmer} and~\cite[Section~5]{FK4rank}. To carry this out, we need to systematically index the variables to deal with the $k$-fold expansion.
\begin{definition}[Indices]
We call any element in $\F_3^{2k}$ an index.
For any index 
\[\bu =(u^{(1)}_1,\dots,u^{(k)}_1,u^{(1)}_2,\dots,u^{(k)}_2)\in \F_3^{2k},\]
write $\bu^{(j)}\coloneqq(u^{(j)}_1,u^{(j)}_2)$,
$\bu_1\coloneqq \pi_1(\bu)\coloneqq(u^{(1)}_1,\dots,u^{(k)}_1)$, and $\bu_2\coloneqq\pi_2(\bu)\coloneqq(u^{(1)}_2,\dots,u^{(k)}_2)$.
For 
$\blam =(\lambda_1,\dots,\lambda_k)\in \F_3^{k}$ and any indices 
$\bu,\bv \in \F_3^{2k}$,
define the functions
\[\Phi_k(\bu,\bv)=\bv_2\cdot (\bu_1-\bv_1) =\sum_{j=1}^{k}v^{(j)}_{2}(u^{(j)}_{1}-v^{(j)}_{1}),\qquad R_k(\blam,\bu)=\blam\cdot \bu_1=\sum_{j=1}^{k}\lambda_j u^{(j)}_{1}.
\]
\end{definition}
With this indexing system defined, using $\bu,\bv$ to denote indices in $\F_3^{2k}$, we get the expansion
 \[
(\#\Sel_{\hphi}(\hE_n))^k=
3^{k(\delta_\eta -\omega_1(n))}
\sum_{\blam\in(\F_3^k)^{\varepsilon_\eta }}
\sum_{(D_\bu)}
\prod_\bu
\leg{\rho}{D_{\bu}}_3^{R_k(\blam ,\bu)}
\prod_{\bu,\bv}
\leg{D_{\bu}}{d_\bv^{(1)}}_3^{\Phi_k(\bu,\bv)}
,
\]
where $(D_{\bu})$ varies over factorizations that satisfies $\cf(2n)=\prod_{\bu\in\F_3^{2k}}D_{\bu}$.

It is apparent that~\eqref{eq:hsel1} does not depend on the choice of $\pi$ in the expression, so we may average over the two possible choices of irreducible $\pi, \overline{\pi}\equiv 1\bmod 3$ in $\OO_K$ above each rational prime $p\equiv 1\bmod 3$ that divides $n$.
The gives the factor $2^{-\omega_1(n)}$ in Lemma~\ref{lemma:Selkexp} below.

\begin{lemma}\label{lemma:Selkexp}
Let $n\in\mathcal{D}_\eta $.
We have 
\begin{equation}\label{eq:Selkexp}
(\#\Sel_{\hphi}(\hE_n))^k=
3^{k\delta_\eta }
(3^{k}\cdot 2)^{-\omega_1(n)}
\sum_{\blam\in(\F_3^k)^{\varepsilon_\eta }}
\sum_{(d_\bu)}
\prod_\bu
\leg{\rho}{D_{\bu}}_3^{R_k(\blam ,\bu)}
\prod_{\bu,\bv}
\leg{D_{\bu}}{d_\bv^{(1)}}_3^{\Phi_k(\bu,\bv)}
,
\end{equation}
where the sum is taken over all
$(d_{\bu})=(d_{\bu}^{(1)}d_{\bu}^{(2)})_{\bu\in\F_3^{2k}}$ such that 
\begin{itemize}
\item $d^{(1)}_{\bu}\in\OO_K$ is a product of irreducibles $\pi\equiv 1\bmod 3$ in $\OO_K$ above rational primes $p\equiv 1\bmod 3$,
\item $d^{(2)}_{\bu}\in \Z$ is a product of rational primes $p\not\equiv 1\bmod 3$,
\item $\cf(2n)=\prod_{\bu\in\F_3^{2k}}D_{\bu}$, where $D_{\bu}=d^{(1)}_{\bu}\overline{d^{(1)}_{\bu}}d^{(2)}_{\bu}$ are pairwise coprime positive integers;
\item $d^{(2)}_\bu=1$ if $\bu_2\neq \mathbf{0}$;
\item $3\nmid D_{\bu}$ if $\bu\neq\mathbf{0}$.
\end{itemize}
\end{lemma}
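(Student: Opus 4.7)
The plan is to raise~\eqref{eq:hsel1} to the $k$-th power and reorganize the resulting $k$-fold iterated sum as a single sum over a refined factorization of $\cf(2n)$ indexed by $\F_3^{2k}$. This is the standard ``$k$-fold refinement'' trick used by Heath-Brown and by Fouvry--Kl\"uners in the works cited before the lemma.

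Concretely, taking the $k$-th power of~\eqref{eq:hsel1} produces the prefactor $3^{k(\delta_\eta - \omega_1(n))}$, a sum over $\blam = (\lambda_1, \ldots, \lambda_k) \in (\F_3^{\varepsilon_\eta})^k$, and a $k$-fold product of sums over pairwise-coprime factorizations $\cf(2n) = \prod_{\bu^{(j)} \in \F_3^2} D^{(j)}_{\bu^{(j)}}$. I then identify a $k$-tuple of such factorizations with a single pairwise-coprime factorization $(D_\bu)_{\bu \in \F_3^{2k}}$: for each prime $p \mid \cf(2n)$, the $j$-th factorization assigns $p$ to some $D^{(j)}_{\bu^{(j)}(p)}$, and the concatenation $\bu(p) = (\bu^{(1)}(p), \ldots, \bu^{(k)}(p)) \in \F_3^{2k}$ records all $k$ choices simultaneously, giving $D^{(j)}_{\bu^{(j)}} = \prod_{\bu :\ j\text{-th pair equals } \bu^{(j)}} D_\bu$ and analogously $d^{(j),(1)}_{\bv^{(j)}} = \prod_{\bv :\ j\text{-th pair equals } \bv^{(j)}} d^{(1)}_\bv$. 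The single-copy constraints transfer: primes $p \not\equiv 1 \bmod 3$ have $u^{(j)}_2 = 0$ for every $j$, so they may appear only in $d^{(2)}_\bu$ with $\bu_2 = \mathbf{0}$, and the condition $3 \nmid D_\bu$ for $\bu \neq \mathbf{0}$ propagates identically.

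With this bijection in hand, multiplicativity of the cubic residue character collapses the $k$-fold products. The two crucial identities are
\[\sum_{j=1}^k R_1(\lambda_j, \bu^{(j)}) = \blam \cdot \bu_1 = R_k(\blam, \bu) \quad\text{and}\quad \sum_{j=1}^k \Phi_1(\bu^{(j)}, \bv^{(j)}) = \bv_2 \cdot (\bu_1 - \bv_1) = \Phi_k(\bu, \bv),\]
so the two product-of-symbols terms in~\eqref{eq:hsel1} reassemble into $\prod_\bu \leg{\rho}{D_\bu}_3^{R_k(\blam, \bu)} \prod_{\bu, \bv} \leg{D_\bu}{d^{(1)}_\bv}_3^{\Phi_k(\bu, \bv)}$ as claimed. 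The extra factor $2^{-\omega_1(n)}$ in $(3^k \cdot 2)^{-\omega_1(n)}$ comes from the averaging remark just preceding the lemma: the expression is invariant under the choice of $\pi$ versus $\overline{\pi}$ above each rational prime $p \equiv 1 \bmod 3$ dividing $n$, so one may freely average over the $2^{\omega_1(n)}$ such choices, which allows $d^{(1)}_\bu$ in the lemma to range over arbitrary products of primes $\pi \equiv 1 \bmod 3$ in $\OO_K$ above such rational primes, rather than a fixed selection.

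This is essentially bookkeeping, with no serious obstacle. The only point requiring care is the verification of the two exponent identities above under the $(\F_3^2)^k \cong \F_3^{2k}$ identification, and both are immediate from the definitions of $R_k$ and $\Phi_k$.
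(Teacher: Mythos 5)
Your proposal is correct and follows exactly the route the paper takes (and largely leaves implicit): raise~\eqref{eq:hsel1} to the $k$-th power, merge the $k$ independent factorizations into a single $\F_3^{2k}$-indexed refinement via multiplicativity of the cubic residue symbol, check the additivity identities for $R_k$ and $\Phi_k$, and obtain the $2^{-\omega_1(n)}$ factor by averaging over the two choices of $\pi$ above each split prime. No gaps.
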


If we pair the two terms involving a given pair of $\bu$ and $\bv$ in the inner product of~\eqref{eq:Selkexp} and apply cubic reciprocity, we get 
\begin{multline}\label{eq:expandproduct}
\leg{D_{\bu}}{d_\bv^{(1)}}_3^{\Phi_k(\bu,\bv)}\leg{D_{\bv}}{d_\bu^{(1)}}_3^{\Phi_k(\bv,\bu)}\\
=\leg{d^{(1)}_{\bu}}{d_\bv^{(1)}}_3^{\Phi_k(\bu,\bv)+\Phi_k(\bv,\bu)}\leg{\overline{d^{(1)}_{\bu}}}{d_\bv^{(1)}}_3^{\Phi_k(\bu,\bv)-\Phi_k(\bv,\bu)}
\leg{d^{(2)}_{\bu}}{d_\bv^{(1)}}_3^{\Phi_k(\bu,\bv)}
\leg{d^{(2)}_{\bv}}{d_\bu^{(1)}}_3^{\Phi_k(\bv,\bu)}.
\end{multline}
Observe that the right hand side of~\eqref{eq:expandproduct} will be constantly $1$ whenever $\Phi_k(\bu,\bv)=\Phi_k(\bv,\bu)=0$. If we take a set of indices such that this property holds pairwise, then the product of symbols $\leg{D_{\bu}}{d_\bv^{(1)}}_3^{\Phi_k(\bu,\bv)}$ over this set would be constantly $1$. This motivates the following definition.
\begin{definition}[Unlinked indices]
We call two indices $\bu,\bv\in\F_3^{2k}$ unlinked if \[\Phi_k(\bu,\bv)=\Phi_k(\bv,\bu)=0.\]
Otherwise, we say that they are linked.
\end{definition}

To proof Theorem~\ref{theorem:phihatSelmer}, our goal is to estimate
\begin{equation}\label{eq:defSeta}
S_\eta (N,k)
\coloneqq 
\sum_{ n\in\mathcal{D}_\eta (N)}3^{-k(\delta_\eta+\omega_2(\cf(2n)))}(\#\Sel_{\hphi}(\hE_n))^k
\end{equation}
for each $\eta\in \{0,\pm 1,\pm 2, \pm 3, \pm 4\}$.
Note that 
\begin{equation}\label{eq:relateSeta}
\sum_{n\in\mathcal{D}(N)} \left(3^{-(\delta_n+\omega_2(\cf(2n)))}\#\Sel_{\hphi}(\hE_n)\right)^k= \sum_{\eta=-4}^{4}S_\eta (N,k).\end{equation}
Put~\eqref{eq:Selkexp} into the expression~\eqref{eq:defSeta}, we have
\[
S_\eta (N,k)
=
\sum_{ n\in\mathcal{D}_\eta (N)}
3^{-k\omega_2(\cf(2n))}\cdot
(3^k\cdot 2)^{-\omega_1(n)}
\sum_{\blam\in(\F_3^k)^{\varepsilon_\eta }}
\sum_{(d_\bu)}
\prod_\bu
\leg{\rho}{D_{\bu}}_3^{R_k(\blam ,\bu)}
\prod_{\bu,\bv}
\leg{D_{\bu}}{d_\bv^{(1)}}_3^{\Phi_k(\bu,\bv)}
.\]

Write $D^{(1)}_{\bu}=d^{(1)}_{\bu}\overline{d^{(1)}_{\bu}}$ and $D^{(2)}_{\bu}=d^{(2)}_{\bu}$, so $D_{\bu}=D^{(1)}_{\bu}D^{(2)}_{\bu}$ . Dissect the sum according to the size of each $D_\bu^{(j)}$ with the dissection parameter 
\[\Delta\coloneqq 1+(\log N)^{-3^k},\] and according to the divisibility of $D_\bu$ by $2$.
For each $i\in \{1,2\}$ and $\bu\in\F_3^{2k}$, define a number $A^{(i)}_{\bu}$ of the form $1,\Delta,\Delta^2,\dots$. 
Also define \[\Omega\coloneqq e3^k(\log\log N+B),\]
where $B$ is the constant in Lemma~\ref{lemma:HR}.

For each $\mathbf{A}=(A^{(i)}_{\bu})_{i,\bu}$ and $\nu\in\{0,1,2\}$,
define the restricted sum
\begin{equation}\label{eq:restrictedsum}
S_{\eta ,\nu}(N,k,\mathbf{A})
\coloneqq 
\sum_{\blam\in(\F_3^k)^{\varepsilon_\eta }}
\sum_{(d_\bu)}
\prod_\bu 3^{-k\omega_2(D_{\bu})} (3^k\cdot 2)^{-\omega_1(D_{\bu})}
\leg{\rho}{D_{\bu}}_3^{ R_k(\blam,\bu)}
\prod_{\bu,\bv}
\leg{D_{\bu}}{d^{(1)}_\bv}_3^{\Phi_k(\bu,\bv)},
\end{equation}
where the sum is taken over all $(d_\bu^{(i)})_{i,\bu}$ such that
\begin{align*}
 &\prod_\bu D_\bu=\cf(2n)\text{ for some }n\in\mathcal{D}_\eta (N),\ v_2(\cf(2n))=\nu,\\
 & A^{(i)}_\bu\leq D^{(i)}_\bu<\Delta A^{(i)}_\bu,\ \omega(D^{(i)}_\bu)< \Omega.
\end{align*}
Notice that from Lemma~\ref{lemma:Selkexp}, $D^{(2)}_\bu\neq 1$ only if $\bu_2=\mathbf{0}$. Therefore we may assume
$A^{(2)}_\bu= 1$ if $\bu_2\neq\mathbf{0}$.

For ease of notation, we henceforth suppress any $k$ and $\epsilon$ in the big-$O$ and $\ll$ notations, since our estimates are allowed to depend on $k$ and $\epsilon$.
We also define here two numbers
\begin{align*}
 N^{\dagger}&\coloneqq(\log N)^{28(1+C+(9^k+3^k)(1+3^k))},\\
N^{\ddagger}&\coloneqq\exp((\log N)^{3^{-k}\epsilon}),
\end{align*}
where $C>0$ is the constant in Lemma~\ref{lemma:largesieve}.
We will later use these to split into cases according to the size of some $A_{\bu}^{(i)}$ relative to $N^{\dagger}$ and $N^{\ddagger}$.

\subsection{Number of prime factors of the variables}
To write $S_\eta (N,k)$ in terms of $S_{\eta ,\nu}(N,k,\mathbf{A})$, we bound the contribution from those $(D_\bu)$ such that
$\omega(D_\bu)\geq \Omega$ holds for some $\bu\in\F_3^{2k}$.

\begin{lemma}\label{lemma:largeomega}
\[S_\eta (N,k)=
\sum_{\nu=0}^2\sum_{\mathbf{A}} S_{\eta ,\nu}(N,k,\mathbf{A})+O(N(\log N)^{-1}),
 \]
 where the sum is over $\mathbf{A}$ is such that \[\prod_{i=1}^2\prod_{\bu\in\F_3^{2k}} A^{(i)}_\bu \leq 
 \begin{cases} \frac{1}{4}N&\text{if } \nu=0,\\
2N&\text{if } \nu\in\{1,2\}.
 \end{cases}
 \] 
\end{lemma}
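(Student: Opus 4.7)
The plan is to start from the explicit formula of the preceding lemma for $(\#\Sel_{\hphi}(\hE_n))^k$, convert it via the Tamagawa relation~\eqref{eq:tamratio} so that $3^{-k\omega_2(\cf(n))}(\#\Sel_\phi(E_n))^k$ expands as the same character sum appearing inside $S_{\eta,\nu}(N,k,\mathbf{A})$, and then decompose the sum over tuples both dyadically in the sizes $D^{(i)}_\bu$ and according to $\nu=v_2(\cf(2n))\in\{0,1,2\}$. The combinatorial identity to verify is that the product weights $\prod_\bu 3^{-k\omega(D_\bu)}2^{-\omega_1(D_\bu)}$ collapse to $3^{-k\omega(\cf(2n))}2^{-\omega_1(n)}$ since the $D_\bu$ are pairwise coprime, which after combination with the Tamagawa weight matches the factor $(3^k\cdot 2)^{-\omega_1(n)}$ on the right-hand side of the preceding lemma.

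For the dyadic split, I would, for every $i\in\{1,2\}$ and $\bu\in\F_3^{2k}$, force $A^{(i)}_\bu\leq D^{(i)}_\bu<\Delta A^{(i)}_\bu$ with $A^{(i)}_\bu$ a nonnegative power of $\Delta$. Since $\prod_{i,\bu}D^{(i)}_\bu=\cf(2n)$, the required bound on $\prod_{i,\bu}A^{(i)}_\bu$ follows from the two cases $\cf(2n)=n/4\leq N/4$ (when $v_2(n)=2$, i.e.\ $\nu=0$) and $\cf(2n)=2n\leq 2N$ (when $v_2(n)\in\{0,1\}$, i.e.\ $\nu\in\{1,2\}$). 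Imposing in addition the restriction $\omega(D^{(i)}_\bu)\leq\Omega$ for every $\bu,i$ then recovers exactly $S_{\eta,\nu}(N,k,\mathbf{A})$; the remainder, consisting of admissible tuples with some $\omega(D^{(i)}_\bu)>\Omega$, is the error to bound.

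To bound this error, observe that $\omega(D^{(i)}_\bu)>\Omega$ forces $\omega(n)\geq\Omega-1$, since $D^{(i)}_\bu\mid\cf(2n)$ and $\omega(\cf(2n))\leq\omega(n)+1$. For each such $n$, every character symbol has modulus one, and the number of admissible tuples $(d_\bu)$ attached to $\cf(2n)$ is at most $(2\cdot 3^{2k})^{\omega(\cf(2n))}$ (a factor $3^{2k}$ for the choice of index $\bu$ per prime, and a factor $2$ for the choice of $\pi$ versus $\overline{\pi}$ above each split rational prime). Combined with the product weights, each $n$ contributes $O(3^{k\omega(n)})$. Applying Lemma~\ref{lemma:HR} yields
\[\text{error}\ll\sum_{\ell\geq\Omega-1}3^{k\ell}\cdot\frac{N}{\log N}\cdot\frac{(\log\log N+B)^{\ell-1}}{(\ell-1)!}.\]
With $\Omega=e\cdot 3^k(\log\log N+B)$, the ratio between consecutive terms is at most $1/e$, so the sum is dominated by its first term; Stirling's formula gives that this first term is $O\bigl(N(\log N)^{-1}(\log\log N)^{-1/2}\bigr)$ after the factor $3^{k\Omega}$ cancels against the reciprocal factorial, producing the claimed bound $O(N(\log N)^{-1})$.

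The only genuine subtlety is the bookkeeping of the $\omega$-style counts summing correctly across $\bu$, so that the Tamagawa conversion produces exactly the weights stated in the definition of $S_{\eta,\nu}$; once this is in place the Hardy--Ramanujan plus Stirling calculation is standard, and the choice $\Omega=e\cdot 3^k(\log\log N+B)$ is dictated precisely by where the ratio $3^k(\log\log N+B)/\ell$ first drops below $1/e$.
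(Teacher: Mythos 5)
Your proposal is correct and follows essentially the same route as the paper: the dyadic decomposition into boxes $A^{(i)}_\bu\leq D^{(i)}_\bu<\Delta A^{(i)}_\bu$ is pure bookkeeping, and the error from tuples with some $\omega(D^{(i)}_\bu)>\Omega$ is handled exactly as you do — counting the placements of each prime of $\cf(2n)$ among the $D^{(i)}_\bu$ (the paper is marginally sharper, allowing only $3^k$ rather than $3^{2k}$ placements for primes $p\not\equiv 1\bmod 3$, but after multiplying by the weights both counts give $O(3^{k\omega(n)})$ per bad $n$), then applying Lemma~\ref{lemma:HR} and a Chernoff-type tail bound with $\Omega=e3^k(\log\log N+B)$, which is precisely your ratio-test-plus-Stirling computation.
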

\begin{proof}
If $\omega(D_\bu)\geq \Omega$ for some $\bu\in\F_3^{2k}$, then $( D_\bu)$ must come from some $n\in\mathcal{D}(N)$ with $\omega(n)\geq \Omega$. 
Suppose $p\mid n$. If $p\equiv 1 \bmod 3$, there are $3^{2k}$ ways to place $p$ in one of the $D^{(1)}_{\bu}$ and there are two choices of irreducibles $\pi$ above $p$ that satisfies $\pi\equiv 1\bmod 3$. If $p\not\equiv 1\bmod 3$, there are $3^{k}$ ways to place $p$ in one of the $D^{(2)}_{\bu}$, since $D_{\bu}^{(2)}\neq 1$ only if $\bu_2=\mathbf{0}$.
Therefore the contribution from such $n$ to $S_\eta (N,k)$ is bounded by
\[\ll \sum_{\substack{n\leq N\\ \omega(n)\geq \Omega}}9^{k\omega_1(n)}3^{k\omega_2(n)}3^{-k\omega(n)}
=
\sum_{\substack{n\leq N\\ \omega(n)\geq \Omega}}3^{k\omega_1(n)}
\leq 
\sum_{\substack{n\leq N\\ \omega(n)\geq \Omega}}3^{k\omega(n)}.
\]
Now by Lemma~\ref{lemma:HR},
\begin{equation}\label{eq:poie}
\ll
\frac{N}{\log N}
\sum_{\ell\geq \Omega}
\frac{3^{k\ell}(\log\log N+B)^{\ell}}{\ell!}
= 
\frac{N}{\log N}
\sum_{\ell\geq \Omega}
\frac{(3^{k}\log\log N+3^kB)^{\ell}}{\ell!}.
\end{equation}
Bound the sum with a geometric series
\[
\sum_{\ell\geq \Omega}
\frac{(3^{k}\log\log N+3^kB)^{\ell}}{\ell!}
\leq
\frac{(3^{k}\log\log N+3^kB)^{\lceil\Omega\rceil}}{\lceil\Omega\rceil!}
\sum_{m\geq 0}
\frac{(3^{k}\log\log N+3^kB)^{m}}{\Omega^m}.
\]
The last sum converges because $\Omega>3^k(\log\log N+B)$.
Using Stirling's approximation $n!>\sqrt{2\pi n}(n/e)^{n}$, we get a lower bound for $\lceil\Omega\rceil!$. Then
\[
\frac{(3^{k}\log\log N+3^kB)^{\lceil\Omega\rceil}}{\lceil\Omega\rceil!}\ll \frac{1}{\Omega^{\frac{1}{2}}}.
\]
Putting this back to~\eqref{eq:poie}, we have the bound
$\ll N(\log N)^{-1}$.
\end{proof}

\subsection{Incomplete boxes}
Having obtained Lemma~\ref{lemma:largeomega}, we want to estimate $S_{\eta ,\nu}(N,k,\mathbf{A})$ for different $\mathbf{A}$. We start by bounding those that will land into the error term of Theorem~\ref{theorem:phihatSelmer}. First is the contribution from those $\mathbf{A}$ that define boxes not entirely contained in $\mathcal{D}(N)$.
We bound the contribution from 
\[\cF_1\coloneqq\left\{\mathbf{A}:\prod_i\prod_\bu A^{(i)}_\bu\geq \Delta^{-(9^k+3^k)}N\right\}.\]
If $n\in\mathcal{D}(N)$ has a factorization that is in some boxes defined by $\mathbf{A}\in\cF_1$, then clearly $n\geq \Delta^{-(9^k+3^k)}$. We bound the terms from such $n$ in a similar manner as in the proof of Lemma~\ref{lemma:largeomega}.
We see that
\[\sum_{\mathbf{A}\in\cF_1} |S_{\eta ,\nu}(N,k,\mathbf{A})|\ll
 \sum_{\Delta^{-(9^k+3^k)}N\leq n\leq N}
9^{k\omega_1(n)}3^{-k\omega_1(n)}
\ll \left(1-\Delta^{-(9^k+3^k)}\right)N(\log N)^{\frac{1}{2}(3^k-1)},
\]
where we applied Lemma~\ref{lemma:Shiu} in the last inequality.
Since $\Delta=1+(\log N)^{-3^k}$,
we have
\begin{equation}\label{eq:incomplete}
\sum_{\mathbf{A}\in\cF_1} |S_{\eta ,\nu}(N,k,\mathbf{A})|\ll N(\log N)^{-1}.\end{equation}

\subsection{Few large indices}
The second set of $\mathbf{A}$ are those with very few large indices. We will bound the contribution from the set
\[\begin{split}
 \cF_2\coloneqq &
 \left\{\mathbf{A}:\#\{\bu\in\F_3^{2k}:A^{(1)}_{\bu}\geq N^{\ddagger}\}+ \#\{\bu\in\F_3^{2k}:A^{(2)}_{\bu}\geq N^{\ddagger}\}\leq 3^k+3^{k-1}\right\}.
\end{split}\]
We partition $\cF_2$ by defining
\[\cF_2(r,s)\coloneqq \left\{\mathbf{A}:\#\{\bu\in\F_3^{2k}:A^{(1)}_{\bu}\geq N^{\ddagger}\}=r,\ \#\{\bu\in\F_3^{2k}:A^{(2)}_{\bu}\geq N^{\ddagger}\}=s\right\}.\]

Let $n$ be the product of those $D^{(i)}_\bu>N^{\ddagger}$ and $m$ be the product of the remaining ones, then
\[
\sum_{\mathbf{A}\in\cF_2(r,s)} |S_{\eta ,\nu}(N,k,\mathbf{A})|
\ll
\sum_{m\leq (N^{\ddagger})^{9^k-r}}
(9^k-r)^{\omega_1(m)}(3^k-s)^{\omega_2(m)}
3^{-k\omega(m)}
\sum_{n\leq N/m}r^{\omega_1(n)}s^{\omega_2(n)}
3^{-k\omega(n)}.
\]
Apply Lemma~\ref{lemma:Shiu} to the last sum, then by Mertens theorem 
\[
\sum_{\mathbf{A}\in\cF_2(r,s)} |S_{\eta ,\nu}(N,k,\mathbf{A})|
\ll
N
(\log N)^{\frac{1}{2}\left(\frac{r+s}{3^k}\right)-1}
\sum_{m\leq (N^{\ddagger})^{9^k}}
\frac{3^{k\omega(m)}}{m}
\ll 
N(\log N)^{\frac{1}{2}\left(\frac{r+s}{3^k}\right)-1}
(\log N^{\ddagger})^{3^{k}}
. 
\]
Summing over $(r,s)$ in $\{r+ s\leq 3^k+ 3^{k-1}\}$, we have
\begin{equation}\label{eq:fewlarge}
\sum_{\mathbf{A}\in\cF_2} |S_{\eta ,\nu}(N,k,\mathbf{A})|\ll
\sum_{r,s}
N(\log N)^{\frac{1}{2}\left(\frac{r+s}{3^k}\right)-1}
(\log N^{\ddagger})^{3^{k}}
\ll
N(\log N)^{-\frac{1}{3}+\epsilon}.
\end{equation}

\subsection{Two large linked indices}
Looking at the expression of $S_{\eta,\nu}(N,k,\mathbf{A})$ in~\eqref{eq:restrictedsum}, if there is a pair of linked indices $\bu,\bv$, we expect that the product of the symbols involving $\bu,\bv$ would oscillate in the box defined by $\mathbf{A}$ unless $A_{\bu}=1$ or $A_{\bv}=1$. Here we treat the case when there are two linked indices that are large using the large sieve in Lemma~\ref{lemma:largesieve}.
Let
\[\cF_3\coloneqq\{\mathbf{A}: A^{(i)}_\bu,\ A^{(1)}_\bv\geq N^{\dagger}\text{ for some }i \text{ and linked } \bu,\bv\}\setminus \cF_1.\]
If $\Phi_k(\bu,\bv)$ and $\Phi_k(\bv,\bu)$ are both nonzero and $i=1$, collecting the symbols involving both $\bu$ and $\bv$,~\eqref{eq:expandproduct} becomes one of
\[\overline{\leg{d^{(1)}_\bu}{d^{(1)}_\bv}_3},\ \leg{\overline{d^{(1)}_\bu}}{d^{(1)}_\bv}_3,\ 
\leg{d^{(1)}_\bu}{\overline{d^{(1)}_\bv}}_3,\
\leg{d^{(1)}_\bu}{d^{(1)}_\bv}_3.
\]
Otherwise, that is, if exactly one of $\Phi_k(\bu,\bv)$ and $\Phi_k(\bv,\bu)$ is nonzero, or $i=2$, only one of $\leg{D^{(i)}_\bu}{d^{(1)}_\bv}_3$ and $\leg{D^{(1)}_\bv}{d^{(i)}_\bu}_3$ can appear in the sum.

For any $\mathbf{A}\in\cF_3$, rewrite the sum in terms the symbols involving both of the linked indices $\bu,\bv$, we have
\[\begin{split}
& |S_{\eta,\nu}(N,k,\mathbf{A})|\\
&\ll
\sum_{\blam\in(\F_3)^{\varepsilon_\eta}}
\sum_{\substack{(d^{(r)}_\bw)\\ (r,\bw)\neq (i,\bu), (1,\bv)}}
\left|\sum_{d^{(i)}_\bu}\sum_{d^{(1)}_\bv}
a_{\blam}(d^{(i)}_\bu,(d^{(r)}_\bw))a_{\blam}(d^{(1)}_\bv,(d^{(r)}_\bw))
\leg{D^{(i)}_{\bu}}{d^{(1)}_\bv}_3^{\Phi_k(\bu,\bv)}
\leg{D^{(1)}_{\bv}}{d^{(1)}_\bu}_3^{(2-i)\Phi_k(\bv,\bu)}\right|,
\end{split}
\]
where
\[
a_{\blam}(d^{(i)}_\bu,(d^{(r)}_\bw))=
3^{-k\omega(D_{\bu})}2^{-\omega_1(D_{\bu})}
\leg{\rho}{D_{\bu}}_3^{ R_k(\blam ,\bu)}
\prod_{\bw\neq\bu,\bv}
\leg{D^{(i)}_{\bu}}{d^{(1)}_\bw}_3^{\Phi_k(\bu,\bw)}
\prod_{\substack{(r,\bw)\neq (1,\bv)\\ \bw\neq \bu}}
\leg{D^{(r)}_{\bw}}{d^{(1)}_\bu}_3^{(2-i)\Phi_k(\bw,\bu)}
.
\]
Apply Lemma~\ref{lemma:embedding}, then since
$A^{(i)}_\bu,A^{(1)}_\bv\geq N^{\dagger}$,
we obtain
\[|S_{\eta,\nu}(N,k,\mathbf{A})|\ll N(N^{\dagger})^{-\frac{1}{28}}(\log N)^C\]
for some absolute constant $C$.
Sum over the $O((\log N)^{(9^k+3^k)(3^k+1)})$ possible $\mathbf{A}$ and plug in the definition of $N^{\dagger}$, then
\begin{equation}\label{eq:largelinked}
 \sum_{\mathbf{A}\in\cF_3}|S_{\eta,\nu}(N,k,\mathbf{A})|\ll N(N^{\dagger})^{-\frac{1}{28}}(\log N)^{C+(9^k+3^k)(3^k+1)}
\ll N(\log N)^{-1}.
\end{equation}

\subsection{A large and a small linked indices}
Given a pair of linked indices, if one of the variables in the pair is small and nontrivial, we can use the Siegel--Walfisz theorem in Lemma~\ref{lemma:SW} to obtain cancellation when the other variable varies in some large range. 
More precisely, define
\[\cF_4\coloneqq\{\mathbf{A}: 2\leq A^{(i)}_\bv<N^{\dagger},\ A^{(1)}_\bu\geq N^{\ddagger}\text{ for some }i\text{ and some linked }\bu,\bv\}\setminus (\cF_1\cup\cF_3).\]

Suppose $\mathbf{A}\in\cF_4$ and fix $\bu$ to be such that $A^{(1)}_\bu\geq N^{\ddagger}$. Let $\mathcal{V}\subseteq \F_{3}^{2k}$ be the set of $\bv$ that are linked with $\bu$ and satisfy $A^{(1)}_\bv \geq 2$ or $A^{(2)}_\bv\geq 2$. We assume that $\bu$ is chosen such that $\mathcal{V}$ is non-empty. Since $\mathbf{A}\notin\cF_3$, any $\bv\in\mathcal{V}$ must satisfy $A^{(i)}_\bv<N^{\dagger}$.

For any given $(D_{\bv})_{\bv\in\mathcal{V}}$ that is in the range defined by $\mathbf{A}$, that is, $A^{(i)}_\bv\leq D^{(i)}_\bv<\Delta A^{(i)}_\bv$, set
\[\chi (d)=\leg{\rho}{\N(d)}_3^{ R_k(\blam,\bu)}
\prod_{\bv\in\mathcal{V}}\leg{\N(d)}{d^{(1)}_\bv}_3^{\Phi_k(\bu,\bv)}
\leg{D_\bv}{d}_3^{\Phi_k(\bv,\bu)}\]
for $d\in\OO_K$.
The character $\chi$ is nontrivial by construction because any $\bv\in\mathcal{V}$ is linked with $\bu$ and satisfies $D^{(i)}_\bv\geq A^{(i)}_\bv\geq 2$ for at least one of $i\in\{1,2\}$.
Plugging in $d=d^{(1)}_\bu$, we can rewrite the expression as
\[\chi (d^{(1)}_\bu)=\leg{\rho}{D^{(1)}_\bu}_3^{ R_k(\blam,\bu)}
\prod_{\bv\in\mathcal{V}}\leg{D^{(1)}_\bu}{d^{(1)}_\bv}_3^{\Phi_k(\bu,\bv)}
\leg{D_\bv}{d^{(1)}_\bu}_3^{\Phi_k(\bv,\bu)}.\]
Then
\[S_{\eta,\nu}(N,k,\mathbf{A})\ll
\max_{\blam\in(\F_3)^k}
\sum_{(D_\bv)_{\bv\neq \bu}, D_\bu^{(2)}}
\left|\sum_{d^{(1)}_\bu}
(3^{k}\cdot 2)^{-\omega_1(D_{\bu})}
\chi(d^{(1)}_\bu)\right|.
\]
Now split the sum according to the number of prime factors of $D_{\bu}^{(1)}$, and apply the assumption $\omega(D_{\bu}^{(1)})<\Omega$ from the definition of~\eqref{eq:restrictedsum},
\[
S_{\eta,\nu}(N,k,\mathbf{A})\ll 
\max_{\blam\in(\F_3)^k}
\sum_{(D_\bv)_{\bv\neq \bu},D^{(2)}_\bu}
\sum_{\ell=1}^{\lfloor\Omega\rfloor}
\frac{1}{(3^k\cdot 2)^{\ell}}
\left|
\sum_{\substack{d^{(1)}_\bu\\ \omega(D^{(1)}_u)=\ell}}
\chi(d^{(1)}_\bu)
\right|.
\]

Write $D^{(1)}_\bu=mp_{\ell}^{r_{\ell}}=p_1^{r_1}\cdots p_{\ell}^{r_{\ell}}$, where $r_i\in\{1,2\}$ and $p_1<p_2<\dots<p_\ell$. Denote by $\pi_\ell$ any irreducible element in $\OO_K$ above $p_\ell$ that satisfies $\pi_\ell\equiv 1\bmod 3$.
Since $\chi$ is multiplicative, we can take out the factor $\chi(\pi_{\ell})^{r_{\ell}}$, so 
\begin{equation}\label{eq:preSW}
S_{\eta,\nu}(N,k,\mathbf{A})\ll 
\max_{\blam\in(\F_3)^k}
\sum_{(D_\bv)_{\bv\neq \bu},D^{(2)}_\bu}
\sum_{\ell=1}^{\lfloor\Omega\rfloor}
\sum_{\substack{m\leq \Delta A_{\bu}^{(1)}\\ \omega(m)=\ell-1}}
\sum_{r_\ell=1}^{2}
\left|
\sum_{\substack{\pi_\ell\equiv 1\bmod 3\\ \N(\pi_\ell)\text{ prime}}} \chi(\pi_{\ell})
\right|,
\end{equation}
where the innermost sum is over
\begin{equation}\label{eq:pinormrange}
\max\left\{\left(\frac{A^{(1)}_\bu}{m}\right)^{\frac{1}{r_{\ell}}},\ p_{\ell-1}\right\}
<\N(\pi_{\ell})=p_{\ell}
<\left(\frac{\Delta A^{(1)}_\bu}{m}\right)^{\frac{1}{r_{\ell}}}
.\end{equation}

Our next task is to apply Lemma~\ref{lemma:SW} to the innermost sum of~\eqref{eq:preSW}.
From~\eqref{eq:pinormrange}, we deduce that 
\[
\left(\frac{\Delta A^{(1)}_\bu}{m}\right)^{\frac{1}{r_{\ell}}}>\N(\pi_{\ell})=p_{\ell}>( A^{(1)}_\bu)^{\frac{1}{r_1+\cdots+r_{\ell}}}>(N^{\ddagger})^{\frac{1}{2\Omega}}
.\]
Since $\chi$ has modulus dividing $9\prod_{\bv\in\mathcal{V}} D_{\bv}\ll (N^{\dagger})^{2\#\mathcal{V}}$, and trivially $\#\mathcal{V}\leq 2\cdot 9^k$, we can bound the norm of the conductor of $\chi$ by
$\N(\mathfrak{f})\ll(N^{\dagger})^{4\cdot 9^k}$.
For each prime ideal in $\OO_K$ that splits in $K/\Q$, there is exactly one generator that is congruent to $1\bmod 3$. 
The statement in Lemma~\ref{lemma:SW} includes the contribution from inert primes, but this can be removed because there are only $\ll\sqrt{x}$ primes in $\OO_K$ with norm $\leq x$ that are inert in $K/\Q$. 
Hence Lemma~\ref{lemma:SW} implies that, for any $\tau>0$, we have
\[\sum_{\substack{\pi_\ell=1\bmod 3\\ \N(\pi_\ell)\text{ satisfies }\eqref{eq:pinormrange}\\ \N(\pi_\ell)\text{ prime}}}\chi(\pi_{\ell})
\ll _{\tau}
(N^{\dagger})^{\tau}\left(\frac{ A^{(1)}_\bu}{m}\right)^{\frac{1}{r_{\ell}}}
\exp\left(-\frac{c}{(N^{\dagger})^{\tau}}
\left(\frac{\log N^{\ddagger}}{\Omega}\right)^{\frac{1}{2}}\right)+ \Omega
\]
for some constant $c>0$ depending on $\tau$, and where the term $\Omega$ comes from those $\pi_\ell$ that may divide some $D_\bv$.
Putting this back into~\eqref{eq:preSW} and sum over all integers $m\leq \Delta A_{\bu}^{(1)}/(N^{\ddagger})^{\frac{1}{2\Omega}}$, we get
\[S_{\eta,\nu}(N,k,\mathbf{A})\ll_{\tau}
\sum_{(D_\bv)_{\bv\neq \bu},D^{(2)}_\bu}
(N^{\dagger})^{\tau}
A^{(1)}_\bu\exp\left(-\frac{c}{(N^{\dagger})^{\tau}}
\left(\frac{\log N^{\ddagger}}{\Omega}\right)^{\frac{1}{2}}\right)
.\]
Then summing over $(D_{\bv})_{\bv\neq\bu}$, and $D_\bu^{(2)}$, we arrive at
\[S_{\eta,\nu}(N,k,\mathbf{A})
\ll_{\tau} N(N^{\dagger})^{\tau}
\exp\left(-\frac{c}{(N^{\dagger})^{\tau}}\left(\frac{\log N^{\ddagger}}{\Omega}\right)^{\frac{1}{2}}\right).
\]
Finally pick $\tau$ small enough relative to $k$ and $\epsilon$, and sum over the $O((\log N)^{((9^k+3^k)(3^k+1)})$ possible $\mathbf{A}$, we have
\begin{equation}\label{eq:largesmalllinked}
 \sum_{\mathbf{A}\in\cF_4}|S_{\eta,\nu}(N,k,\mathbf{A})|
\ll N(\log N)^{-1}.
\end{equation}

\subsection{Geometry of unlinked indices}
In the sum $S_{\eta,\nu}(N,k,\mathbf{A})$, when there is a pair of linked indices $\bu,\bv$, we should be able to use~\eqref{eq:largelinked}~or~\eqref{eq:largesmalllinked} to obtain enough cancellations from the oscillations of the symbols, unless $A_{\bu}=1$ or $A_{\bv}=1$, which forces $D_{\bu}=1$ or $D_{\bv}=1$. Therefore it make sense to study sets of indices that are pairwise unlinked in order to obtain the main term.
\begin{definition}[Maximal unlinked set]
We say that a set of indices is unlinked if any two indices in the set are unlinked. We call an unlinked set maximal if it is not contained in any other unlinked set.
\end{definition}
We will show that the main term of Theorem~\ref{theorem:phihatSelmer} comes from the terms $S_{\eta,\nu}(N,k,\mathbf{A})$ when $A^{(i)}_{\bu}= 1$ for all $\bu$ outside of a maximal set of unlinked indices.
The shape of maximal unlinked sets with respect to the function $\Phi_k$ is available from work of Klys~\cite[Section~8]{Klys}, which we summarize below.

\begin{lemma}[{\cite[Section~8]{Klys}}]\label{Lemma:unlinkedform}
The maximal unlinked sets are of the form
\[\mathbf{a}+V,\]
where $\mathbf{a}\in\F_3^{2k}$ and $V$ is a subspace of $\F_3^{2k}$ of dimension $k$. Moreover $\pi_2(\mathbf{a})\in \pi_1(V)^{\perp}$ and $V=\pi_1(V)\oplus\pi_1(V)^{\perp}$.
\end{lemma}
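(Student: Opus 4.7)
The plan is, given a maximal unlinked set $S \subseteq \F_3^{2k}$ and any element $\mathbf{a} \in S$, to show that $V \coloneqq S - \mathbf{a}$ is a $k$-dimensional subspace that factors as $\pi_1(V) \times \pi_1(V)^\perp$, and that the shift automatically satisfies $\pi_2(\mathbf{a}) \in \pi_1(V)^\perp$. The underlying structure is the quadratic form $Q(\bu) \coloneqq \bu_1 \cdot \bu_2$ on $\F_3^{2k}$: the unlinked condition amounts to $\bv_2 \cdot (\bu_1 - \bv_1) = 0$ and $\bu_2 \cdot (\bv_1 - \bu_1) = 0$, so the lemma is essentially asking for the maximal affine subspaces in the common zero locus of these mixed bilinear constraints.

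First I show that $S$ is an affine subspace. For any $\bu, \bv, \bw, \mathbf{t} \in S$, the vector $\bu + \bv - \bw$ is unlinked with $\mathbf{t}$: on expanding $\Phi_k(\bu + \bv - \bw, \mathbf{t})$ and $\Phi_k(\mathbf{t}, \bu + \bv - \bw)$, the unlinked relations for the three pairs $(\bu, \mathbf{t}), (\bv, \mathbf{t}), (\bw, \mathbf{t})$ make every term collapse. Maximality then forces $\bu + \bv - \bw \in S$, so $V = S - \mathbf{a}$ is closed under the affine operations and is a subspace. To transfer unlinkedness from $S$ to $V$, I use the identity
\[\Phi_k(\bu' + \mathbf{a}, \bv' + \mathbf{a}) = \Phi_k(\bu', \bv') + \mathbf{a}_2 \cdot (\bu'_1 - \bv'_1),\]
which shows the relation is not translation-invariant. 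Substituting $\bv' = \mathbf{0} \in V$ forces $\mathbf{a}_2 \cdot \bu'_1 = 0$ for all $\bu' \in V$, which is precisely the moreover clause $\pi_2(\mathbf{a}) \in \pi_1(V)^\perp$, and once this holds the residual shift vanishes and $V$ is itself unlinked in the original sense.

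With $V$ unlinked and containing $\mathbf{0}$, evaluating the unlinked relation against $\mathbf{0}$ gives $Q(\bu) = 0$ for $\bu \in V$, and hence $\bv_2 \cdot \bu_1 = 0 = \bu_2 \cdot \bv_1$ for all $\bu, \bv \in V$. Writing $U_1 = \pi_1(V)$ and $U_2 = \pi_2(V)$, this yields $U_2 \subseteq U_1^\perp$ and $V \subseteq U_1 \times U_1^\perp$. To upgrade inclusion to equality and fix the dimension I re-invoke maximality of $S$: unpacking the criterion for $\bu + \mathbf{a}$ to be addable to $S$ gives exactly $\bu_1 \in U_2^\perp$, $\bu_2 \in U_1^\perp$, and $Q(\bu) = 0$, so by maximality any such $\bu$ lies in $V$. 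Taking $\bu = (y, \mathbf{0})$ with $y \in U_2^\perp$ shows $U_2^\perp \subseteq U_1$, so $U_1 = U_2^\perp$; symmetrically $U_2 = U_1^\perp$. Thus $V = U_1 \times U_1^\perp$ has dimension $k$ and, viewed as a subspace of $\F_3^{2k}$, is $\pi_1(V) \oplus \pi_1(V)^\perp$. The main obstacle is the non-translation-invariance of the unlinked relation, but the obstruction term itself encodes the moreover clause, so both assertions drop out of the maximality argument at once.
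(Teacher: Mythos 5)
The paper gives no proof of this lemma (it is imported verbatim from Klys, Section~8), so your argument has to stand on its own. Your overall strategy is sound and most of it checks out: closure of $S$ under $\bu+\bv-\bw$ via maximality, the translation identity whose obstruction term is exactly the ``moreover'' clause, and a second appeal to maximality to identify $V$. One small imprecision first: in showing $\bu+\bv-\bw$ is unlinked with $\mathbf{t}$, the vanishing of $\Phi_k(\mathbf{t},\bu+\bv-\bw)$ does \emph{not} collapse using only the three pairs $(\bu,\mathbf{t}),(\bv,\mathbf{t}),(\bw,\mathbf{t})$; expanding produces cross terms such as $\bu_2\cdot\bv_1$ that you must evaluate using the unlinkedness of $\bu,\bv,\bw$ among themselves. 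All six pairwise relations from $S$ are needed, after which the sum does vanish, so this is a gap in the stated justification rather than in the claim.

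The genuine gap is in the last step. Your addability criterion for $\bu+\mathbf{a}$ is incomplete: unlinkedness of $\bu+\mathbf{a}$ with the base point $\mathbf{a}\in S$ itself gives $\Phi_k(\bu+\mathbf{a},\mathbf{a})=\pi_2(\mathbf{a})\cdot\bu_1$, so the correct criterion is $Q(\bu)\coloneqq\bu_1\cdot\bu_2=0$, $\bu_1\in U_2^{\perp}$, $\bu_2\in U_1^{\perp}$, \emph{and} $\pi_2(\mathbf{a})\cdot\bu_1=0$. At that point you only know $\pi_2(\mathbf{a})\in U_1^{\perp}$, not $\pi_2(\mathbf{a})\in U_2$, so for $\bu=(y,\mathbf{0})$ with $y\in U_2^{\perp}$ the fourth condition need not hold; what maximality yields is only $U_2^{\perp}\cap\pi_2(\mathbf{a})^{\perp}\subseteq U_1$, which together with $U_1\subseteq U_2^{\perp}\cap\pi_2(\mathbf{a})^{\perp}$ gives an equality that is weaker than $U_1=U_2^{\perp}$. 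The repair is short: apply the full criterion first to $\bu=(\mathbf{0},z)$ with $z\in U_1^{\perp}$, for which all four conditions hold trivially, to conclude $U_1^{\perp}\subseteq U_2$ and hence $U_2=U_1^{\perp}$ and $\pi_2(\mathbf{a})\in U_2$; or, more directly, verify that every $(x,y)\in U_1\times U_1^{\perp}$ satisfies all four conditions (here $x\in U_1$ kills both $x\cdot y$ and $\pi_2(\mathbf{a})\cdot x$), so that $U_1\times U_1^{\perp}\subseteq V\subseteq U_1\times U_1^{\perp}$, which gives $V=\pi_1(V)\oplus\pi_1(V)^{\perp}$ and $\dim V=k$ in one stroke.
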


We now gather those $S_{\eta,\nu}(N,k,\mathbf{A})$ that we have not bounded previously in~\eqref{eq:incomplete},~\eqref{eq:fewlarge},~\eqref{eq:largelinked} and~\eqref{eq:largesmalllinked}.
\begin{lemma}
If $\mathbf{A}\notin\cF_1\cup\cF_2\cup\cF_3\cup\cF_4$, then we have
\[A^{(1)}_\bv=A^{(2)}_\bv=1\text{ for all } \bv\notin\F_3^{k}\times\{\mathbf{0}\}\subseteq \F_3^{2k} .\]
\end{lemma}
\begin{proof}
Take $\mathcal{U}=\{\bu:A^{(1)}_{\bu}\geq N^{\ddagger}\}$ and $\mathcal{V}=\{\bu:A^{(2)}_{\bu}\geq N^{\ddagger}\}$. By assumption $\mathbf{A}\notin\cF_3$, so $\mathcal{U}$ is unlinked, hence $\#\mathcal{U}\leq 3^{k}$. Recall that $A^{(2)}_\bu= 1$ for all $\pi_2(\bu)\neq\mathbf{0}$, so $\#\mathcal{V}\leq 3^{k}$.
Since $\mathbf{A}\notin\cF_2$, we have $\#\mathcal{U}+\#\mathcal{V}> 3^{k}+ 3^{k-1}$, so $\#\mathcal{U}> 3^{k-1}$ and $\#\mathcal{V}> 3^{k-1}$. The maximal unlinked sets are cosets of vector spaces of dimension $k$ by Lemma~\ref{Lemma:unlinkedform}, so any two maximal unlinked sets must have intersection of size at most $3^{k-1}$. This implies that $\mathcal{U}$ is contained in exactly one maximal unlinked set $\mathcal{M}$.
Moreover $\mathcal{V}$ must be in a maximal unlinked set containing $\mathcal{U}$ since~$\mathbf{A}\notin\cF_3$, so $\mathcal{V}\subseteq\mathcal{M}$.

Since $\#\mathcal{V}> 3^{k-1}$ and $A^{(2)}_\bu= 1$ for all $\pi_2(\bu)\neq\mathbf{0}$, the only possibility is that $\mathcal{M}= \F_3^{k}\times\{\mathbf{0}\}$.
If there exists some $\bv\notin\mathcal{U}$ that is unlinked with every $\bu\in\mathcal{U}$, then $\mathcal{U}\cup\{\bv\}$ is also an unlinked set, so $\bv\in\F_3^{k}\times\{\mathbf{0}\}$.
Therefore any $\bv\notin\F_3^{k}\times\{\mathbf{0}\}$ is linked to some $\bu\in\mathcal{U}$.
Then $\mathbf{A}\notin\cF_3\cup\cF_4$ implies that $A^{(1)}_{\bv}=A^{(2)}_{\bv}=1$ for all $\bv\notin \F_3^k\times\{\mathbf{0}\}$.
\end{proof}

The leading term of $S_\eta(N,k)$ therefore comes from $\mathcal{M}=\F_3^k\times\{\mathbf{0}\}$.
Define
\begin{equation}\label{eq:SetaM}
S_\eta^{\mathcal{M}}(N,k)\coloneqq \sum_{\nu=0}^2\sum_{\substack{\mathbf{A}\\ \bv\notin\mathcal{M}\Rightarrow A^{(i)}_\bv=1}}S_{\eta,\nu}(N,k,\mathbf{A}).
\end{equation}
Put together the error terms from~\eqref{eq:incomplete},~\eqref{eq:fewlarge},~\eqref{eq:largelinked},~\eqref{eq:largesmalllinked} and Lemma~\ref{lemma:largeomega}.
\begin{lemma}\label{lemma:maximalerror}
Let $\mathcal{M}=\F_3^k\times\{\mathbf{0}\}$. Then
\[S_\eta(N,k)=S_\eta^{\mathcal{M}}(N,k)+O(N(\log N)^{-\frac{1}{3}+\epsilon}).
\]
\end{lemma}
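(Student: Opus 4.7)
The plan is to consolidate the five preliminary estimates proved in this section and invoke the Klys classification to pin down the surviving configuration; most of the heavy lifting is already done in the preceding subsections, so this lemma is essentially a bookkeeping step.

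I would start from the dissection identity
\[
S_\eta(N,k) = \sum_{\nu=0}^{2}\sum_{\mathbf{A}} S_{\eta,\nu}(N,k,\mathbf{A}) + O\!\left(N(\log N)^{-1}\right),
\]
where the tail absorbs the $\omega(D_\bu)\geq \Omega$ contribution. Then I would partition the inner sum into $\mathbf{A}\in \cF_1\cup\cF_2\cup\cF_3\cup\cF_4$ and its complement. For $\mathbf{A}$ in a bad family, the four subsections supply contributions summing to $O(N(\log N)^{-1/3+\epsilon})$, with the dominant term coming from~\eqref{eq:fewlarge}; this is comfortably absorbed into the claimed error $O(N(\log N)^{3^k/2-5/6+\epsilon})$.

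For $\mathbf{A}$ in the complement, set $\mathcal{U}=\{\bu:A^{(1)}_\bu\geq N^{\ddagger}\}$ and $\mathcal{V}=\{\bu:A^{(2)}_\bu\geq N^{\ddagger}\}$. Avoidance of $\cF_3$ forces $\mathcal{U}$ to be unlinked, hence $\#\mathcal{U}\leq 3^k$; avoidance of $\cF_2$ forces $\#\mathcal{U}+\#\mathcal{V}>3^k+3^{k-1}$, so both $\#\mathcal{U},\#\mathcal{V}>3^{k-1}$; and the restriction $A^{(2)}_\bv=1$ whenever $\pi_2(\bv)\neq\mathbf{0}$ gives $\mathcal{V}\subseteq\F_3^k\times\{\mathbf{0}\}$. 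Any two distinct maximal unlinked sets, being distinct $k$-dimensional affine subspaces of $\F_3^{2k}$, intersect in at most $3^{k-1}$ elements, so $\#\mathcal{U}>3^{k-1}$ forces $\mathcal{U}$ to lie in a unique maximal unlinked set $\mathcal{M}$. This $\mathcal{M}$ must also contain $\mathcal{V}$, since any $\bv\in\mathcal{V}$ linked to some element of $\mathcal{U}$ would put $\mathbf{A}$ into $\cF_3\cup\cF_4$. Because $\mathcal{V}\subseteq\F_3^k\times\{\mathbf{0}\}$ still has cardinality $>3^{k-1}$, the same intersection count applied to $\mathcal{M}$ and the coset $\F_3^k\times\{\mathbf{0}\}$ forces $\mathcal{M}=\F_3^k\times\{\mathbf{0}\}$.

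Finally, any $\bv\notin\mathcal{M}$ with $A^{(i)}_\bv\geq 2$ would be linked to some $\bu\in\mathcal{U}$ and thereby absorbed by $\cF_3$ or $\cF_4$, so for the surviving $\mathbf{A}$ I have $A^{(1)}_\bv=A^{(2)}_\bv=1$ for every $\bv\notin\mathcal{M}$; what remains is by definition $S_\eta^{\mathcal{M}}(N,k)$. The main obstacle is the combinatorial-geometric step pinning down $\mathcal{M}$: one must carefully combine the Klys classification of maximal unlinked sets as cosets $\mathbf{a}+V$ with $V=\pi_1(V)\oplus\pi_1(V)^{\perp}$ of dimension $k$, together with the divisibility constraint $\mathcal{V}\subseteq\F_3^k\times\{\mathbf{0}\}$ and the sharp intersection bound $3^{k-1}$ for distinct $k$-flats, to exclude every other coset.
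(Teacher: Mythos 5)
Your proposal is correct and follows essentially the same route as the paper: consolidate the error bounds from the truncation on $\omega(D_\bu)$, incomplete boxes, $\cF_2$, $\cF_3$, $\cF_4$, and then use the Klys classification of maximal unlinked sets together with the cardinality constraints $\#\mathcal{U},\#\mathcal{V}>3^{k-1}$ and $\mathcal{V}\subseteq\F_3^k\times\{\mathbf{0}\}$ to force $\mathcal{M}=\F_3^k\times\{\mathbf{0}\}$ and $A^{(i)}_\bv=1$ off $\mathcal{M}$. The only difference is that you make explicit the intersection bound $3^{k-1}$ for distinct $k$-flats, which the paper leaves implicit.
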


We now compute the leading term for Theorem~\ref{theorem:phihatSelmer}. Put $A^{(i)}_\bv=1$ for all $\bv\notin\mathcal{M}$ into~\eqref{eq:restrictedsum}.
\[S_\eta^{\mathcal{M}}(N,k)= 
\sum_{n\in\mathcal{D}_\eta(N)}
3^{-k(\omega_1(\cf(2n))+\omega_2(\cf(2n)))}
\sum_{\blam\in(\F_3^k)^{\varepsilon_\eta}}
\sum_{(D_\bu)}
\prod_{\bu\in\mathcal{M}}\leg{\rho}{D_\bu}_3^{ R_k(\blam,\bu)}
.
\]
Expanding the inner sum we get
\[S_\eta^{\mathcal{M}}(N,k)= 
\sum_{\blam\in(\F_3^k)^{\varepsilon_\eta}}
\sum_{n\in\mathcal{D}_\eta(N)}
3^{-k(\omega_1(\cf(2n))+\omega_2(\cf(2n)))}
\prod_{\substack{p\mid \cf(2n)\\p\neq 3}} 
\sum_{\bu\in\mathcal{M}}\leg{\rho}{p^{v_p(2n)}}_3^{ R_k(\blam,\bu)}
.\]
When $p\neq 3$ and $v_p(2n)\neq 0$, we can check that
\[\sum_{\bu\in\mathcal{M}}\leg{\rho}{p^{v_p(2n)}}_3^{ R_k(\blam,\bu)}
=
\begin{cases}
3^k&\text{if $\blam=\mathbf{0}$ or $p\equiv \pm 1\bmod 9$,}\\
 \hfil 0&\text{otherwise}.
\end{cases}
\]
Take the term indexed by $\blam=\mathbf{0}$ as the main term and bound the rest by Lemma~\ref{lemma:Shiu}, so
\begin{align*}
S_\eta^{\mathcal{M}}(N,k)
-\#\mathcal{D}_\eta(N)
&=2\varepsilon_{\eta}\sum_{\substack{n\in\mathcal{D}_\eta(N)\\ p\mid n\Rightarrow p=3\text{ or}\\p\equiv \pm1\bmod 9}}
3^{-k(\omega_1(\cf(2n))+\omega_2(\cf(2n)))}
\prod_{\substack{p\mid n\\ p\equiv \pm1\bmod 9}}
3^k\\
&\ll\sum_{\substack{n\leq N\\ p\mid n\Rightarrow p=3\text{ or}\\p\equiv \pm1\bmod 9}}
1
\ll N(\log N)^{-\frac{2}{3}}
.\end{align*}
Combining 
\[S_\eta^{\mathcal{M}}(N,k)
=
\#\mathcal{D}_\eta(N)
+
O\left(N(\log N)^{-\frac{2}{3}}\right)
\]
and Lemma~\ref{lemma:maximalerror},
we conclude that
\[S_\eta(N,k)=
\#\mathcal{D}_\eta(N)
+
O\left(N(\log N)^{-\frac{1}{3}+\epsilon}\right).
\]
Putting this back to the expression in~\eqref{eq:relateSeta} completes the proof of Theorem~\ref{theorem:phihatSelmer}.

\section{Moments of the size of \texorpdfstring{$\hphi$}{phihat}-Selmer}\label{section:regularmoments}
The asymptotics of the moments of $ \#\Sel_{\hphi}(\hE_n)$ can be obtained by modifying the proof of Theorem~\ref{theorem:phihatSelmer}.

\begin{theorem}\label{theorem:omega2moments}
For any $\epsilon>0$ and any positive integer $k$, we have
\begin{equation}\label{eq:omega2moments}
 \sum_{n\in\mathcal{D}(N)} (\#\Sel_{\hphi}(\hE_n))^k
=
 \sum_{n\in\mathcal{D}(N)} 3^{k(\delta_n+\omega_2(\cf(2n)))}\cdot \#\mathcal{D}(N)
+O_{k,\epsilon}\left(N(\log N)^{\frac{3^{k}}{2}-\frac{5}{6}+\epsilon}\right).
\end{equation}\end{theorem}

\begin{proof}
To compute $\sum_{n\in\mathcal{D}(N)} (\#\Sel_{\hphi}(\hE_n))^k$, it suffices to repeat the proof of Theorem~\ref{theorem:phihatSelmer} with a weight of $3^{k(\delta_\eta+\omega_2(\cf(2n)))}$.
We accordingly define weighted version of~\eqref{eq:defSeta} and~\eqref{eq:restrictedsum} with an extra factor of $3^{k(\delta_\eta+\omega_2(\cf(2n)))}$, 
\begin{align*}
S_\eta' (N,k)
&\coloneqq 
\sum_{ n\in\mathcal{D}_\eta (N)}(\#\Sel_{\phi}(E_n))^k
,\\
S_{\eta ,\nu}'(N,k,\mathbf{A})
&\coloneqq 3^{k\delta_\eta}
\sum_{\blam\in(\F_3^k)^{\varepsilon_\eta }}
\sum_{(d_\bu)}
\prod_\bu 3^{-k\omega_1(D_{\bu})} 2^{-\omega_1(D_{\bu})}
\leg{\rho}{D_{\bu}}_3^{ R_k(\blam,\bu)}
\prod_{\bu,\bv}
\leg{D_{\bu}}{d^{(1)}_\bv}_3^{\Phi_k(\bu,\bv)}.
\end{align*}
It is routine to check that with this extra weight, the estimates in Lemma~\ref{lemma:largeomega},~\eqref{eq:incomplete},~\eqref{eq:largelinked} and~\eqref{eq:largesmalllinked} still hold.
However, the upper bound in~\eqref{eq:fewlarge} will need to be replaced.
Placing this weight and follow the same argument as before, we have
\begin{align*}
\sum_{\mathbf{A}\in\cF_2(r,s)} |S_{\eta ,\nu}'(N,k,\mathbf{A})|
&\ll
\sum_{m\leq (N^{\ddagger})^{9^k-r}}
(9^k-r)^{\omega_1(m)}(3^k-s)^{\omega_2(m)}
3^{-k\omega_1(m)}
\sum_{n\leq N/m}r^{\omega_1(n)}s^{\omega_2(n)}
3^{-k\omega_1(n)}\\
&\ll 
N(\log N)^{\frac{1}{2}\left(\frac{r}{3^k}+s\right)-1}
(\log N^{\ddagger})^{3^{k}}
. 
\end{align*}
The next step is to sum over $(r,s)$ in $\{r+ s\leq 3^k+ 3^{k-1}\}$. Since that $A^{(2)}_\bu= 1$ for all $\bu_2\neq\mathbf{0}$, so $s\leq 3^{k}$. Therefore $\frac{r}{3^k}+s$ is maximum in this set when $s=3^k$ and $r=3^{k-1}$.
Therefore we have
\[
\sum_{\mathbf{A}\in\cF_2} |S_{\eta ,\nu}'(N,k,\mathbf{A})|\ll
\sum_{r,s}
N(\log N)^{\frac{1}{2}\left(\frac{r}{3^k}+s\right)-1}
(\log N^{\ddagger})^{3^{k}}
\ll
N(\log N)^{\frac{3^k}{2}-\frac{5}{6}+\epsilon}.
\]

Define a weighted version of~\eqref{eq:SetaM},
\[{S'}_\eta^{\mathcal{M}}(N,k)\coloneqq \sum_{\nu=0}^2\sum_{\substack{\mathbf{A}\\ \bv\notin\mathcal{M}\Rightarrow A^{(i)}_\bv=1}}S_{\eta,\nu}'(N,k,\mathbf{A}).\]
By the same argument as before, the corresponding statement for Lemma~\ref{lemma:maximalerror} becomes
\[S'_\eta(N,k)={S'}_\eta^{\mathcal{M}}(N,k)+O\left(N(\log N)^{\frac{3^k}{2}-\frac{5}{6}+\epsilon}\right)
\]
and we also have the estimate \[
{S'}_\eta^{\mathcal{M}}(N,k)
-\sum_{n\in\mathcal{D}_\eta(N)}
3^{k(\delta_\eta+\omega_2(\cf(2n))}
\ll\sum_{\substack{n\leq N\\ p\mid n\Rightarrow p=3\text{ or}\\p\equiv \pm1\bmod 9}}
3^{k\cdot \#\{p\mid n\: :\: p\equiv -1\bmod 9\}}
\ll N(\log N)^{\frac{3^{k-1}}{2}-\frac{5}{6}}
\]
by an application of Lemma~\ref{lemma:Shiu}.
Putting everything together we get~\eqref{eq:omega2moments} as claimed.
\end{proof}

To understand the growth of~\eqref{theorem:omega2moments} in terms of $N$, we rewrite the main term using the classical Selberg--Delange method. Theorem~\ref{thm:explicitck} is an explicit version of Theorem~\ref{theorem:usualhatmoments}.

\begin{theorem}\label{thm:explicitck}
For any $\epsilon>0$ and any positive integer $k$, we have
\[
 \frac{1}{\#\mathcal{D}(N)}\sum_{n\in\mathcal{D}(N)} (\#\Sel_{\hphi}(\hE_n))^k=c_k(\log N)^{\frac{1}{2}(3^k-1)}+O_{k,\epsilon}\left((\log N)^{\frac{3^{k}}{2}-\frac{5}{6}+\epsilon}\right),
\]
where
\begin{equation}\label{eq:ck}
c_k\coloneqq \frac{(2\cdot 3^{k+1}+1)(3^{k+1}+3^{-k+1}+7)}{7\cdot 13\cdot 
\Gamma(\frac{1}{2}(3^k+1))}\cdot 
\prod_{\substack{p\equiv 2\bmod 3\\p\neq 2}}
\frac{1+\frac{3^k}{p}+\frac{3^k}{p^{2}}}{1+\frac{1}{p}+\frac{1}{p^{2}}}
\prod_{p}\left(1-\frac{1}{p}\right)^{\frac{1}{2}(3^k-1)}.
\end{equation}
\end{theorem}
\begin{proof}
To rewrite the main term in~\eqref{eq:omega2moments}, we begin by spliting the sum according to $n\bmod 9$. 
First consider the case when $\eta\in\{\pm 1,\pm 2,\pm 4\}$, and use the orthogonality of Dirichlet characters to package this congruence condition 
\[\sum_{n\in\mathcal{D}_\eta(N)} 3^{k(\delta_\eta+\omega_2(\cf(2n)))}
=
\sum_{n\in\mathcal{D}(N)} \left(\frac{1}{\varphi(9)}\sum_{\chi\bmod 9}\overline{\chi}(\eta)\chi(n)\right)3^{k(\delta_\eta+\omega_2(\cf(2n))},
\]
where $\varphi$ denotes the Euler's totient function.
Now sum over $\eta\in\{\pm 1,\pm 2,\pm 4\}$ and rearrange the order of summation
\[\sum_{\eta\in\{\pm 1,\pm 2,\pm 4\}}\sum_{n\in\mathcal{D}_\eta(N)} 3^{k(\delta_\eta+\omega_2(\cf(2n)))}
=\frac{1}{\varphi(9)}
 \sum_{\chi\bmod 9}\left(\sum_{\eta}3^{k\delta_{\eta}}\overline{\chi}(\eta)\right)\sum_{n\in\mathcal{D}(N)}\chi(n)3^{k\omega_2(\cf(2n))}.
\]
For any non-principal character $\chi$, putting in the definition of $\delta_\eta$, we deduce that
\[\sum_{\eta}3^{k\delta_\eta}\overline{\chi}(\eta)=
\sum_{\eta}3^{k\delta_\eta}\overline{\chi}(\eta)-\sum_{\eta}\overline{\chi}(\eta)
=
\sum_{\eta}(3^{k\delta_\eta}-1)\overline{\chi}(\eta)
=
(3^{-k}-1)(\overline{\chi}(-4)+\overline{\chi}(4)),
\]
which is only nonzero for the two non-principal characters $\chi=\chi_1$ and $\overline{\chi}_1\bmod 9$ defined by $\chi_1(\pm 2)=\rho$.
Consider the following Dirichlet series and its Euler product
\begin{align*}
F_\chi(s)&=\sum_{n} \chi(n)3^{k\omega_2(\cf(2n))} n^{-s}\\
&=\left(3^k+\frac{\chi(2)3^{k}}{2^s}+\frac{\chi(2^2)}{2^{2s}}\right)\prod_{\substack{p\equiv 2\bmod 3\\p\neq 2}} \left(1+\frac{\chi(p)3^{k}}{p^s}+\frac{\chi(p^{2})3^{k}}{p^{2s}}\right)
\prod_{p\equiv 1\bmod 3} \left(1+\frac{\chi(p)}{p^s}+\frac{\chi(p^2)}{p^{2s}}\right).
\end{align*}
For the characters $\chi_1$ and $\overline{\chi}_1$, since $\chi_1(-4)+\chi_1(-1)+\chi_1(2)=\overline{\chi}_1(-4)+\overline{\chi}_1(-1)+\overline{\chi}_1(2)=0$, the pole of $F_{\chi_1}(s)$ and $F_{\overline{\chi}_1}(s)$ at $s=1$ must have order $<\frac{1}{2}(3^{k}+1)$. Therefore $\sum_{n} \chi_1(n)3^{k\omega_2(\cf(2n))}$ and $\sum_{n} \overline{\chi}_1(n)3^{k\omega_2(\cf(2n))}$ will be absorbed into an error term of $O\left(N(\log N)^{\frac{1}{2}(3^k-1)-1}\right)$.

For the principal character $\chi_0\bmod 9$, 
we compute
\[\sum_{\eta}3^{k\delta_\eta}\overline{\chi}_0(\eta)=
4+2\cdot 3^{-k}.
\] 
Moreover, we can see from the Euler product that $F_{\chi_0}$ has a pole at $s=1$ of order $\frac{1}{2}(3^k+1)$.
By a general version of the classical Selberg--Delange method by Tenenbaum~\cite[Theorem~II.5.2]{Tenenbaum}, we get the estimate
\[\sum_{n\in\mathcal{D}(N)}\chi_0(n)3^{k\omega_2(\cf(2n))} =
c_0N(\log N)^{\frac{1}{2}(3^k-1)}+O\left(N(\log N)^{\frac{1}{2}(3^k-1)-1}\right),\]
where 
\begin{align*}
c_0&
\coloneqq\frac{1}{\Gamma(\frac{1}{2}(3^k+1))}\cdot \lim_{s\rightarrow 1}\frac{F_{\chi_0}(s)}{\zeta(s)^{\frac{1}{2}(3^k+1)}}\\
&=\frac{1}{\Gamma(\frac{1}{2}(3^k+1))}
\left(3^k+\frac{3^{k}}{2}+\frac{1}{2^2}\right)\prod_{\substack{p\equiv 2\bmod 3\\p\neq 2}} \left(1+\frac{3^{k}}{p}+\frac{3^{k}}{p^2}\right)
\prod_{p\equiv 1\bmod 3} \left(1+\frac{1}{p}+\frac{1}{p^2}\right)\prod_{p}\left(1-\frac{1}{p}\right)^{\frac{1}{2}(3^k+1)}
\end{align*}

Similarly, for the case when $3\mid \eta$, we also obtain the estimate
\[\sum_{\eta\in\{0,\pm 3\}}\sum_{n\in\mathcal{D}_\eta(N)} 3^{k(\delta_\eta+\omega_2(\cf(2n)))}
 =\left(\frac{3^{k}}{3}+\frac{1}{3^2}\right)
c_0N(\log N)^{\frac{1}{2}(3^k-1)}+O\left(N(\log N)^{\frac{1}{2}(3^k-1)-1}\right).\]
Summing over all cases of $\eta$, we have
\begin{align*}
\sum_{n\in\mathcal{D}(N)} 3^{k(\delta_\eta+\omega_2(\cf(2n)))}
&=\left(\frac{1}{\varphi(9)}(4+2\cdot 3^{-k})+\left(\frac{3^{k}}{3}+\frac{1}{3^2}\right)\right)c_0
N(\log N)^{\frac{1}{2}(3^k-1)}
+O\left(N(\log N)^{\frac{1}{2}(3^k-1)-1}\right)
\\&
=\frac{1}{3}
\left(3^k+\frac{7}{3}+3^{-k}\right)\cdot c_0
N(\log N)^{\frac{1}{2}(3^k-1)}
+O\left(N(\log N)^{\frac{1}{2}(3^k-1)-1}\right).
\end{align*}
Putting this into~\eqref{eq:omega2moments}, and then combining with 
\[ \#\mathcal{D}(N)
=N\prod_p \left(1-\frac{1}{p^{3}}\right)+O\left(N^{\frac{1}{3}}\right)
=N\prod_p \left(1-\frac{1}{p}\right)\left(1+\frac{1}{p}+\frac{1}{p^{2}}\right)+O\left(N^{\frac{1}{3}}\right)
\]
yields the claim in~\eqref{eq:regularmoments}.
\end{proof}

\end{document}